\newtheorem{thm}{Theorem}[section]
\newtheorem{prop}[thm]{Proposition}
\newtheorem{define}[thm]{Definition}
\newtheorem{rem}[thm]{Remark}
\newtheorem{lemma}[thm]{Lemma}
\newcommand{\R}{\mathbb{R}}
\newcommand{\Z}{\mathbb{Z}}
\numberwithin{equation}{section}
\subjclass[2010]{ 76A05, 76D03, 35R11}
\keywords{Oldroyd-B models,  global well-posedness, subcritical}
\begin{document}
\title[Global well-posedness to Oldroyd-B models]{Global well-posedness to  the subcritical  Oldroyd-B type  models in 2D}

\author[Renhui Wan ]{ Renhui Wan$^{\ast}$}

\address{$^1$ School of Mathematics Sciences,
Zhejiang University,
Hanzhou 310027, China}

\email{rhwanmath@zju.edu.cn, rhwanmath@163.com,21235002@zju.edu.cn}

\vskip .2in
\begin{abstract}
We prove the global well-posedness to the 2D Oldroyd-B type models with $\nu \Lambda^{2\alpha}u$ and $\eta\Lambda^{2\beta}\tau$ satisfying $(i)\  \alpha>1, \eta=0$ or $(ii)\  \alpha=1,\ \beta>0$. By establishing the gradient estimate of $u$, $\tau$ and $L^\infty$ bound of ${\rm curl u+\Lambda^{-2}curldiv \tau}$,   Elgidi-Rousset
(Commun. Pure Appl. Math. online, 2015) obtained the global well-posedness for the case $\nu=0$, $\beta=1$. However, for the cases $(i)$ and $(ii)$, it is difficult to  improve the regularity of $u$ and $\tau$ directly, especially when  $\alpha\rightarrow 1^{+}$ in case $(i)$ and  $\beta\rightarrow 0^{+}$  in case $(ii)$. To overcome this difficulty, we
exploit a new structure of the equations coming from the dissipation and coupled term. Then we prove the global well-posedness to these cases by energy method which brings us closer to the more interesting case $\alpha=1$, $\eta=0$.

\end{abstract}

\maketitle

\vskip .2in
\section{Introduction}
\label{s1}
We are concerned with the  Oldroyd-B type models for visco-elastic flow:
\begin{equation} \label{0.1}
\left\{
\begin{aligned}
& \partial_t u + u\cdot\nabla u -\nu \Delta u +\nabla p = \kappa {\rm div}\tau,  \\
& \partial_t \tau + u\cdot\nabla \tau +\beta_1 \tau-\eta\Delta \tau=Q(\nabla u ,\tau)+\alpha_1 Du,\\
& {\rm div} u=0,\\
& u(0,x) =u_0(x),\ \tau(0,x)=\tau_0(x),
\end{aligned}
\right.
\end{equation}
here $(t,x)\in\mathbb{R}^{+}\times\mathbb{R}^d$, $u,p,\tau$ stand for velocity vector, scalar pressure and symmetric tensor, respectively,  $\nu,\eta,\beta_1$ are nonnegative parameters and $\kappa>0,\  \alpha_1>0$ represent the coupling parameters. $Du,\  W(u)$ are the deformation tensor and vorticity tensor,
$$Du:=\frac{\nabla u+(\nabla u)^\top}{2},\ \ W(u):=\frac{\nabla u-(\nabla u)^\top}{2},$$
respectively, and
$$Q(\nabla u,\tau)=W(u)\tau-\tau W(u)+b(Du \tau+\tau Du),\ b\in [-1,1].$$

For the classical case $\nu>0,\ \eta=0$, Chemin-Masmoudic \cite{CM}  studied the local and global well-posedness of (\ref{0.1}) in $d$ dimensions. But for the global result in $L^p$ framework, they needed the small coupling parameter. This gap was filled in a  recent work by Zi-Fang-Zhang \cite{ZFZ} with the method based on Green's matrix of the linearized system, which was developed in \cite{CMZ2}.  Furthermore, \cite{CM} also established some blowup criteria for local solutions, see \cite{LMZ} for some other related results. For the works on bounded domain and exterior domain, we refer to \cite{BM} and \cite{FHZ} and reference therein.

\vskip .1in
When $\nu>0$, $\eta>0$ and $b=1$, Constantin-Klieg \cite{CK} obtained the global well-posedness for (\ref{0.1}) with density equation in 2D. They derived the global $L^2$ bound of $u$ and  $L^1$ bound of $\tau$, and then improve it to  high regularity by energy method.

\vskip .1in
Very recently, for the case $\nu=0$, $\eta>0$ and $Q(\nabla u,\tau)=0$, Elgidi and  Rousset \cite{ER} proved the global well-posedness of (\ref{0.1}) in 2D by establishing the gradient estimates of $u$ and $\tau$ and exploiting a structure of the equations, i.e., $L^\infty$ bound of curl$u$+$\Lambda^{-2}$curldiv$\tau$. In addition, in Remark 1.3 of that paper, they also expected
similar result held for the generalized version of (\ref{0.1}) with $\nu\Lambda^{2\alpha}u$ and $\eta\Lambda^{2\beta}\tau$, $\alpha+\beta=1,$  called critical case, which is given by:
\begin{equation} \label{1.1}
\left\{
\begin{aligned}
& \partial_t u + u\cdot\nabla u +\nu \Lambda^{2\alpha} u +\nabla p ={\rm div}\tau,  \\
& \partial_t \tau + u\cdot\nabla \tau +\eta\Lambda^{2\beta} \tau= Du,\\
& {\rm div} u=0,\\
& u(0,x) =u_0(x),\ \tau(0,x)=\tau_0(x),
\end{aligned}
\right.
\end{equation}
where we have let $\kappa=\alpha_1=1, \beta_1=0$ without loss of generality  and  the definition of $\Lambda$ can be seen in Section \ref{s2}.

\vskip .1in
However, when $0\le\beta<1,\ \alpha+\beta=1$, one can check that it is difficult to improve the $L^2$ bound of $u$ and $\tau$ to $H^\epsilon$ bound, $\forall\ \epsilon>0$, not to mention gradient estimate, so it seems the method in \cite{ER} can not be applied to the these cases.

\vskip .1in
Motivated by the above argument,  in this paper, we  consider (\ref{1.1}) and get the global well-posedness for two slightly subcritical cases: $(i)\ \nu>0,\alpha>1, \eta=0;$ $(ii)\ \nu>0,\alpha=1,\eta>0,\beta>0$.  The main results can be shown as follows:
\begin{thm}\label{t1}
Consider (\ref{1.1}) with $\nu>0,\alpha>1,\eta=0$ and the initial data $(u_0,\tau_0)\in H^s(\mathbb{R}^2)$, $s>2$, satisfying ${\rm div}u_0=0$ and $\omega_0-\mathcal{R}_\alpha\tau_0\in L^2(\mathbb{R}^2)$. Then there exists a unique global solution $(u,\tau)$ satisfying
$$(u,\tau)\in C([0,T];H^s(\mathbb{R}^2)),\ u\in L^2([0,T]; H^{s+\alpha}(\mathbb{R}^2)), \ \forall\  T>0.$$
Moreover, when $1<\alpha\le \frac{3}{2}$,
\begin{equation}\label{1.2}
\begin{aligned}
\sup_{0\le t\le T}\|(\omega-\mathcal{R}_\alpha\tau)(t)\|_{L^2}^2+&\nu\int_{0}^T\|\Lambda^\alpha(\omega-\mathcal{R}_\alpha\tau)(t)\|_{L^2}^2dt\\
 &\le C(T, \nu, \alpha, \|(u_0,\tau_0)\|_{L^2},\|\omega_0-\mathcal{R}_\alpha\tau_0\|_{L^2}).
\end{aligned}
\end{equation}
\end{thm}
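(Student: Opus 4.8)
The plan is to prove Theorem~\ref{t1} by the classical scheme: establish local well-posedness in $H^s$ ($s>2$), derive a priori bounds uniform on every finite time interval, and continue the local solution globally. Local well-posedness is standard: for $\alpha>1$ the term $\nu\Lambda^{2\alpha}u$ is strongly regularizing and the nonlinearities are of transport/quadratic type, so a Friedrichs mollification plus energy estimates (using $H^s\hookrightarrow C^1$ in $2$D for $s>2$) give a unique local solution. The first a priori bound is the basic energy law: testing the $u$-equation against $u$ and the $\tau$-equation against $\tau$, using that $\tau$ is symmetric (so $\int{\rm div}\,\tau\cdot u=-\int\tau:Du$ cancels $\int Du:\tau$) together with ${\rm div}\,u=0$, one gets
\[
\|u(t)\|_{L^2}^2+\|\tau(t)\|_{L^2}^2+2\nu\int_0^t\|\Lambda^\alpha u\|_{L^2}^2\,ds=\|u_0\|_{L^2}^2+\|\tau_0\|_{L^2}^2 .
\]
Thus $u,\tau\in L^\infty_tL^2$ and $u\in L^2_tH^\alpha$; as the introduction notes, this cannot be upgraded to any positive gain on $\tau$, so a finer structure is needed.

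The new structure is the heart of the proof. Taking ${\rm curl}$ of the velocity equation gives $\p_t\om+u\cdot\na\om+\nu\Lambda^{2\alpha}\om={\rm curl}\,{\rm div}\,\tau$, and the second-order, dissipation-free coupling ${\rm curl}\,{\rm div}\,\tau$ is the obstruction. The idea is to absorb it into the dissipation by setting $\mathcal{R}_\alpha:=\nu^{-1}\Lambda^{-2\alpha}{\rm curl}\,{\rm div}$ (an operator of order $2-2\alpha<0$), so that $\nu\Lambda^{2\alpha}\mathcal{R}_\alpha\tau={\rm curl}\,{\rm div}\,\tau$ identically. Putting $G:=\om-\mathcal{R}_\alpha\tau$ and using $\p_t\tau=Du-u\cdot\na\tau$, a direct computation gives
\[
\p_t G+u\cdot\na G+\nu\Lambda^{2\alpha}G=-\mathcal{R}_\alpha Du+[\mathcal{R}_\alpha,u\cdot\na]\tau .
\]
The crucial gain is that the right-hand side no longer contains the second-order coupling. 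Moreover, since ${\rm div}(Du)=\tfrac12\Dd u$ and ${\rm curl}\,\Dd u=\Dd\om=-\Lambda^2\om$, one finds $\mathcal{R}_\alpha Du=-\tfrac{1}{2\nu}\Lambda^{2-2\alpha}\om$, so the forcing is of order $2-2\alpha$ relative to $\om$ and the commutator is of order $2-2\alpha$ in $\tau$ — both genuinely lower order.

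To obtain \eqref{1.2} I test the $G$-equation against $G$; the transport term drops by ${\rm div}\,u=0$, giving
\[
\tfrac12\ddt\|G\|_{L^2}^2+\nu\|\Lambda^\alpha G\|_{L^2}^2=\tfrac{1}{2\nu}\langle\Lambda^{2-2\alpha}\om,G\rangle+\langle[\mathcal{R}_\alpha,u\cdot\na]\tau,G\rangle .
\]
For the first term, writing $\om={\rm curl}\,u$ and bounding it by $\tfrac{1}{2\nu}\|\Lambda^{3-2\alpha}u\|_{L^2}\|G\|_{L^2}$, the hypothesis $1<\alpha\le\tfrac32$ is precisely what places the exponent $3-2\alpha$ in $[0,\alpha]$; interpolation then gives $\|\Lambda^{3-2\alpha}u\|_{L^2}\le\|u\|_{L^2}^{1-\theta}\|\Lambda^\alpha u\|_{L^2}^{\theta}$ with $\theta=(3-2\alpha)/\alpha\in[0,1]$, whose square is integrable in time by $u\in L^2_tH^\alpha$ and the uniform $L^2$ bound. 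After Young's inequality this term is of the form $\tfrac14\|G\|_{L^2}^2$ plus a time-integrable forcing, exactly what a Grönwall argument needs, keeping the dissipation $\nu\|\Lambda^\alpha G\|_{L^2}^2$ on the left to produce the second term of \eqref{1.2}.

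I expect the main obstacle to be the commutator $\langle[\mathcal{R}_\alpha,u\cdot\na]\tau,G\rangle$. While proving \eqref{1.2} only the basic energy is available, so one cannot afford $\|\na u\|_{L^\infty}$ (unavailable in $2$D for $\alpha\le\tfrac32$); instead one must exploit the top-order cancellation in $[\mathcal{R}_\alpha,u\cdot\na]\tau=\mathcal{R}_\alpha(u\cdot\na\tau)-u\cdot\na(\mathcal{R}_\alpha\tau)$, whereby the terms with all derivatives on $\tau$ drop and the commutator becomes an operator of order $2-2\alpha<0$ in $\tau$ carrying one derivative of $u$. A Coifman--Meyer/Kato--Ponce-type estimate then bounds it by $\|\na u\|_{L^{p}}\|\Lambda^{2-2\alpha}\tau\|_{L^{q}}$, with $\na u$ measured in a norm controlled by $L^2_tH^\alpha$ and $\tau\in L^\infty_tL^2$ (here $2-2\alpha<0$ is a smoothing on $\tau$); balancing the exponents in $2$D is again where $\alpha\le\tfrac32$ enters and is the most delicate point. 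Granting \eqref{1.2}, the identity $\om=G+\mathcal{R}_\alpha\tau$ together with the smoothing of $\mathcal{R}_\alpha$ yields $\om\in L^\infty_tL^2$, hence $u\in L^\infty_tH^1$. Higher regularity is then propagated by differentiating the system, reusing the $\om-\mathcal{R}_\alpha\tau$ cancellation at the $\Lambda^s$ level, and closing a Grönwall inequality for $\|(u,\tau)\|_{H^s}$ with the gain $u\in L^2_tH^{s+\alpha}$ by standard commutator and product estimates, now that the low-order norms are controlled. Combined with the local theory and the continuation argument, this gives the unique global solution.
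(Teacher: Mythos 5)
Your route to \eqref{1.2} is essentially the paper's: the good unknown $G=\omega-\mathcal{R}_\alpha\tau$, the identity $\mathcal{R}_\alpha Du=-\tfrac{1}{2\nu}\Lambda^{2-2\alpha}\omega$, the $L^2$ energy estimate on $G$ in which the forcing $\Lambda^{2-2\alpha}\omega$ is handled by writing $\omega={\rm curl}\,u$ and interpolating $\|\Lambda^{3-2\alpha}u\|_{L^2}$ between $L^2$ and $\dot H^\alpha$ (this is exactly where $1<\alpha\le\tfrac32$ enters), and a negative-order commutator bound for $[\mathcal{R}_\alpha,u\cdot\nabla]\tau$. Two caveats even here: the commutator estimate, which is the paper's inequality \eqref{3.12} and occupies a substantial Bony-decomposition argument in the Appendix, is only sketched in your proposal (your proposed bound involves $\|\Lambda^{2-2\alpha}\tau\|_{L^q}$, which itself carries a low-frequency problem, whereas the paper measures the commutator in $\dot H^{2\alpha-3}$ and bounds it by $\|u\|_{H^\alpha}\|\tau\|_{L^2}$); and the range $\alpha>\tfrac32$ of Theorem \ref{t1}, which the paper treats separately (Cases 1--2, by improving the regularity of $u$ and then $\tau$ directly), is not addressed at all, since your interpolation for the forcing term fails there.

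The genuine gap is the passage from \eqref{1.2} to global regularity. Your claim that $\omega=G+\mathcal{R}_\alpha\tau$ together with ``the smoothing of $\mathcal{R}_\alpha$'' yields $\omega\in L^\infty_tL^2$, hence $u\in L^\infty_tH^1$, is false: $\mathcal{R}_\alpha=\tfrac1\nu\Lambda^{-2\alpha}{\rm curl}\,{\rm div}$ is a Fourier multiplier with symbol comparable to $|\xi|^{2-2\alpha}$, $2-2\alpha<0$, which is singular at low frequencies and therefore \emph{not} bounded on $L^2$; from $\tau\in L^\infty_tL^2$, Hardy--Littlewood--Sobolev only gives $\mathcal{R}_\alpha\tau\in L^\infty_tL^q$ with $\tfrac1q=\tfrac12-(\alpha-1)$, i.e.\ some $q>2$ (and this degenerates precisely at $\alpha=\tfrac32$, where the paper instead uses $\|\omega\|_{L^4}\le C\|\Lambda^{3/2}u\|_{L^2}$). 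Moreover, even granting $u\in L^\infty_tH^1$, your closing sentence --- that higher regularity is then propagated by ``standard commutator and product estimates'' --- is exactly the step the paper insists cannot be done directly when $1<\alpha\le\tfrac32$: closing an $H^s$ Gronwall inequality requires the regularity criterion of Theorem \ref{A.1}, namely $\int_0^T\|\nabla u\|_{B_{\infty,\infty}^0}^2+\|\nabla\tau\|_{B_{\infty,\infty}^{-\alpha}}^2\,dt<\infty$, and $H^1$ control of $u$ in 2D comes nowhere near $\nabla u\in L^2_t B^0_{\infty,\infty}$. The paper bridges this by an $L^q$ bootstrap for $\tau$: from $\frac{d}{dt}\|\tau\|_{L^q}\le C\|\omega\|_{L^q}$, splitting $\omega=(\omega-\mathcal{R}_\alpha\tau)+\mathcal{R}_\alpha\tau$, using \eqref{1.2}, Hardy--Littlewood--Sobolev and Gronwall to get $\tau\in L^\infty_T(L^{1/(a(\alpha-1))})$ with $a=\min\{\frac{3-2\alpha}{2\alpha-2},\frac12\}$, and then interpolating (via $\Lambda^{2\alpha-2}\omega$ in $L^{1/(\alpha-1)}$) to verify both integrals in the criterion. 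Without this chain, or an equivalent replacement, your proposal does not reach global well-posedness.
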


\begin{thm}\label{t2}
Consider (\ref{1.1}) with $\nu>0,\alpha=1,\eta>0,0<\beta\le \frac{1}{2}$ and the initial data $(u_0,\tau_0)\in H^s(\mathbb{R}^2)$, $s>2$, satisfying ${\rm div} u_0=0$.  Then there exists a unique global solution $(u,\tau)$ satisfying
$$(u,\tau)\in C([0,T];H^s(\mathbb{R}^2)), \ u\in L^2([0,T]; H^{s+1}(\mathbb{R}^2)), \tau\in L^2([0,T]; H^{s+\beta}(\mathbb{R}^2)),\ \forall \  T>0.$$
Moreover,
\begin{equation}\label{1.3}
\begin{aligned}
\sup_{0\le t\le T}\|(\omega-\mathcal{R}_1\tau)(t)\|_{L^2}^2+&\nu\int_{0}^T\|\nabla(\omega-\mathcal{R}_1\tau)(t)\|_{L^2}^2dt\\
 &\le C(T, \nu, \eta, \beta, \|(u_0,\tau_0)\|_{L^2},\|\omega_0-\mathcal{R}_1\tau_0\|_{L^2}).
\end{aligned}
\end{equation}
\end{thm}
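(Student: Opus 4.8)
The plan is to work with the scalar vorticity $\omega = \partial_1 u_2 - \partial_2 u_1$ and the ``good unknown'' $G = \omega - \mathcal{R}_1\tau$, where for $\alpha=1$ the operator of Section \ref{s2} is the zeroth-order multiplier $\mathcal{R}_1 = \frac{1}{\nu}\Lambda^{-2}\,{\rm curl}\,{\rm div}$. First I would record the basic $L^2$ balance: testing the $u$-equation against $u$ and the $\tau$-equation against $\tau$, the two coupling contributions $\int {\rm div}\,\tau\cdot u$ and $\int Du:\tau$ cancel, using symmetry of $\tau$ and ${\rm div}\,u=0$, giving
\begin{equation*}
\|u(t)\|_{L^2}^2 + \|\tau(t)\|_{L^2}^2 + 2\nu\int_0^t\|\Lambda u\|_{L^2}^2\,ds + 2\eta\int_0^t\|\Lambda^\beta\tau\|_{L^2}^2\,ds = \|u_0\|_{L^2}^2 + \|\tau_0\|_{L^2}^2 .
\end{equation*}
In particular $u\in L^2_tH^1$ and $\tau\in L^2_tH^\beta$.

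Next comes the structural computation. Taking the curl of the momentum equation gives $\partial_t\omega + u\cdot\nabla\omega + \nu\Lambda^2\omega = {\rm curl}\,{\rm div}\,\tau$, while the identity ${\rm curl}\,{\rm div}\,Du = -\tfrac12\Lambda^2\omega$ shows that $R := \Lambda^{-2}{\rm curl}\,{\rm div}$ turns the source $Du$ of the $\tau$-equation into $-\tfrac12\omega$. Differentiating $G = \omega - \tfrac1\nu R\tau$ and using that $\tfrac1\nu$ is a constant, so it commutes with $u\cdot\nabla$, the second-order coupling ${\rm curl}\,{\rm div}\,\tau = \Lambda^2R\tau$ is \emph{exactly} annihilated by the $-\nu\Lambda^2\cdot\tfrac1\nu R\tau$ produced by the dissipation, and I obtain
\begin{equation*}
\partial_t G + u\cdot\nabla G + \nu\Lambda^2 G = \frac{\eta}{\nu}\Lambda^{2\beta}R\tau + \frac{1}{2\nu}\omega + \frac1\nu[R,\,u\cdot\nabla]\tau .
\end{equation*}
This is the whole point of the new structure: the surviving coupling $\frac\eta\nu\Lambda^{2\beta}R\tau$ is only of order $2\beta\le 1$ in $\tau$, instead of order $2$. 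For the $L^2$ estimate \eqref{1.3} I would pair this with $G$. The transport term drops by ${\rm div}\,u=0$; writing $\omega = G + \tfrac1\nu R\tau$ turns $\tfrac{1}{2\nu}\langle\omega,G\rangle$ into $\tfrac{1}{2\nu}\|G\|_{L^2}^2$ plus a piece bounded by $\|\tau\|_{L^2}\|G\|_{L^2}$; the commutator obeys $\|[R,u\cdot\nabla]\tau\|_{L^2}\lesssim\|\nabla u\|_{L^\infty}\|\tau\|_{L^2}$; and the main term is integrated by parts as $\langle\Lambda^\beta R\tau,\Lambda^\beta G\rangle$, with $\|\Lambda^\beta G\|_{L^2}\le\|G\|_{L^2}^{1-\beta}\|\Lambda G\|_{L^2}^{\beta}$ so that the $\|\Lambda G\|^{\beta}$ factor is absorbed into $\nu\|\Lambda G\|_{L^2}^2$ by Young, leaving a factor $\|\Lambda^\beta\tau\|_{L^2}\in L^2_t$. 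Gronwall then yields \eqref{1.3}, and since $\omega = G + \tfrac1\nu R\tau$ with $R$ bounded on $L^2$, this upgrades the $L^2$ bound to $u\in L^\infty_tH^1$.

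The remaining and hardest step is to propagate $H^s$ regularity. I would run a coupled energy estimate for $(\Lambda^{s-1}G,\Lambda^s\tau)$, equivalently $u\in H^s$ and $\tau\in H^s$. On the $G$-side the only dangerous source is again $\frac\eta\nu\Lambda^{2\beta}R\tau$, split as above between $\nu\|\Lambda^sG\|_{L^2}^2$ and the $\tau$-norms. On the $\tau$-side the danger is the source $\langle\Lambda^sDu,\Lambda^s\tau\rangle$: here I would use $\|\Lambda^sDu\|_{L^2}\sim\|\Lambda^s\omega\|_{L^2}\lesssim\|\Lambda^sG\|_{L^2}+\|\Lambda^s\tau\|_{L^2}$, absorb $\|\Lambda^sG\|_{L^2}$ into the velocity dissipation $\nu\|\Lambda^sG\|_{L^2}^2$, and control the leftover $\|\Lambda^s\tau\|_{L^2}^2$ through the interpolation $\|\Lambda^s\tau\|_{L^2}^2\le\epsilon\|\Lambda^{s+\beta}\tau\|_{L^2}^2+C_\epsilon\|\tau\|_{L^2}^2$, the first piece being absorbed into $\eta\|\Lambda^{s+\beta}\tau\|_{L^2}^2$.

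I expect this last absorption to be the crux of the whole argument: it is precisely the place where the top-order loss in the coupling is traded against the two available dissipations, and where the constant $C_\epsilon\sim\epsilon^{-s/\beta}$ degenerates as $\beta\to0^+$, explaining why the method reaches every $\beta>0$ but not $\beta=0$. The transport commutators contribute the usual $\|\nabla u\|_{L^\infty}(\|\Lambda^{s-1}G\|_{L^2}^2+\|\Lambda^s\tau\|_{L^2}^2)$, which I would close by a logarithmic Gronwall (Brezis--Gallou\"et) argument together with the lower-order bounds already obtained. Combining the resulting global $H^s$ a priori bound with local well-posedness in $H^s$, $s>2$, and the standard continuation criterion then produces the unique global solution with the stated dissipative gains $u\in L^2_tH^{s+1}$ and $\tau\in L^2_tH^{s+\beta}$.
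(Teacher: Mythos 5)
Your structural computation (the good unknown $G=\omega-\mathcal{R}_1\tau$, the exact cancellation of ${\rm curl\,div}\,\tau$ against the dissipation, and the resulting equation for $G$) coincides with the paper's, but both estimate steps that follow have genuine gaps. First, your derivation of (\ref{1.3}) is circular: you bound the commutator by $\|[\mathcal{R}_1,u\cdot\nabla]\tau\|_{L^2}\lesssim\|\nabla u\|_{L^\infty}\|\tau\|_{L^2}$ and then invoke Gronwall, but that requires $\int_0^T\|\nabla u\|_{L^\infty}\,dt<\infty$, which is not known at this stage --- the only available information is the energy estimate $u\in L^2_T H^1$, $\tau\in L^\infty_T L^2\cap L^2_T H^\beta$, and obtaining integrable control of $\nabla u$ in $L^\infty$ is essentially the whole difficulty of the theorem. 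The paper avoids this by first integrating by parts, $([\mathcal{R}_1,u\cdot\nabla]\tau|\Gamma_1)=-\sum_i([\mathcal{R}_1,u_i]\tau|\partial_i\Gamma_1)$ (using ${\rm div}\,u=0$), and then applying the Coifman--Rochberg--Weiss estimate (\ref{Weies}) of Lemma \ref{weisis}, which gives $\|[\mathcal{R}_1,u_i]\tau\|_{L^2}\le C[u]_{BMO}\|\tau\|_{L^2}\le C\|\nabla u\|_{L^2}\|\tau\|_{L^2}$; the factor $\|\nabla u\|_{L^2}^2$ \emph{is} time-integrable by the energy identity, so Gronwall closes. Without this BMO device (or an equivalent one requiring only energy-level regularity of $u$), (\ref{1.3}) is not established.

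Second, your $H^s$-propagation scheme cannot close. The Brezis--Gallou\"et step needs $\|\nabla u\|_{L^\infty}\lesssim(1+\|\nabla u\|_{B^0_{\infty,\infty}})\log(e+\|u\|_{H^s}+\|\tau\|_{H^s})$ with $\|\nabla u\|_{B^0_{\infty,\infty}}\in L^2_T$, but the lower-order bounds one can extract from (\ref{1.3}) are only $\tau\in L^\infty_T L^p$ and $\omega\in L^2_T L^p$ for finite $p$ (since $\omega=G+\mathcal{R}_1\tau$ and $\mathcal{R}_1\tau$ is never better than $L^p$), i.e. $\nabla u\in L^2_T B^{-2/p}_{\infty,\infty}$ --- a strictly negative index, which is exactly the obstruction the paper emphasizes as $\beta\to 0^+$. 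Moreover your ``usual'' transport-commutator bound $\|\nabla u\|_{L^\infty}(\|\Lambda^{s-1}G\|_{L^2}^2+\|\Lambda^s\tau\|_{L^2}^2)$ omits the terms of $[\Delta_j,u\cdot\nabla]\tau$ in which all derivatives fall on $u$; these produce factors of the type $\|\nabla\tau\|_{B^{-1}_{\infty,\infty}}\|\tau\|_{H^s}\|u\|_{H^{s+1}}$ and so demand control of $\nabla\tau$ in a negative Besov norm as well. The paper's resolution consists of two ingredients your plan lacks: (i) the regularity criterion of Theorem \ref{A.1}, proved via the refined commutator estimates of Lemma \ref{lemma1}, which only requires (\ref{4.0}), i.e. $\nabla u\in L^2_T B^{-\beta}_{\infty,\infty}$ and $\nabla\tau\in L^2_T B^{-1}_{\infty,\infty}$, with both factors split by Young's inequality between the two dissipations; and (ii) a separate frequency-localized bootstrap on the $\tau$-equation (using the generalized Bernstein inequality and homogeneous Bony decomposition) yielding $\tau\in L^2_T\dot B^{2/q}_{q,\infty}$ for some $q>\frac{2}{\beta}$, i.e. (\ref{aim}), which is what delivers the second condition in (\ref{4.0}). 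The absorption $\|\Lambda^s\tau\|_{L^2}^2\le\epsilon\|\Lambda^{s+\beta}\tau\|_{L^2}^2+C_\epsilon\|\tau\|_{L^2}^2$ that you single out as the crux is, by comparison, routine; the real work is in the two missing steps above.
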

In the  Theorem \ref{t1} and \ref{t2},
$$\omega=\rm curl\  u,\ \mathcal{R}_\alpha \tau:=\frac{1}{\nu}\Lambda^{-2\alpha}curldiv\  \tau, \ \alpha\ge 1, $$
and $\omega_0$ is the initial data of $\omega$.

\begin{rem}\label{r1}
The proofs of the above Theorems do not appear to be able to the case $\nu>0,\alpha=1,\eta=0$ unless we  neglect   the coupling term ${\rm div}\tau$ or
$Du$ in (\ref{1.1}). In fact, provided without ${\rm div}\tau$, we can first solve the first equation, which is actually 2D Navier-Stokes system (global well-posedness is well-known for smooth initial data), and then work out the second linear equation. On the other hand, when neglecting $Du$, we can obtain the global well-posedness by following the idea in Section \ref{s3} and using the $L^\infty$ norm of $\tau$. Additionally, the second special case is very similar to 2D Boussinesq equations which have attracted  many mathematicians' attention recently, (see, e.g., \cite{Hmidi1,Hmidi2,JMWZ,SW}).
\end{rem}

\begin{rem}\label{r2}
The main result in Theorem \ref{t2} also holds for the case $\nu>0,\alpha=1,\beta>\frac{1}{2}$. As a matter of fact, with a similar argument as the proof of the case $\alpha>\frac{3}{2}$ in Theorem \ref{t1}, we can get the global well-posedness of (\ref{1.1}) for the cases $\alpha\ge1,$ $\beta>\frac{1}{2}$, whose proof is omitted.
\end{rem}

Now, let us sketch the difficulty of this problem and our idea. The most difficult situations are $\alpha\rightarrow 1^{+}$ in case ($i$)  and $\beta\rightarrow0^{+}$ in case ($ii$). We require $\alpha\ge 1$ to improve the regularity of $u$, and then need $\alpha>\frac{3}{2}$ or $\beta>\frac{1}{2}$ to improve the regularity of $\tau$. But the conditions in our main results are weaker than these.

\vskip .1in
Our proof is exploiting the structure of (\ref{1.1}) and prove the crucial  estimate (\ref{1.2}) and (\ref{1.3}) by establishing and applying some commutator estimates, respectively. Then we overcome the difficulty and complete the proof  by applying   regularity criteria in inhomogeneous Besov space with non-positive index.
\vskip .1in

The present paper is structured as follows:\\
In the next section, we provide the definition of Besov spaces and related facts such as Bernstein's inequality, and then
prove some important commutator estimates. In the third section, we prove Theorem \ref{t1}, while Theorem \ref{t2} is proved in the following section.
Finally, in the Appendix, we present the regularity criteria for the general cases of (\ref{1.1}), and then present the proof of the inequality (\ref{3.12}).

\vskip .1in
Let us complete this section by describing the notations we shall use in this paper.\\
{\bf Notations} For $A$, $B$ two operator, we denote $[A,B]=AB-BA$, the commutator between $A$ and $B$. The uniform constant  $C$ is  different on different lines,  while the constant $C(\cdot)$ means a constant depends on the element(s) in bracket. $(c_j)_{j\in\Z}$ will be a generic element of $l^2(\Z)$ so that $\sum_{j\in \Z}c_j^2=1$.  We also use $L^p$, $\dot{H}^s$ ($H^s$) and $\dot{B}_{p,r}^s$ ($B_{p,r}^s$) to stand for  $L^p(\mathbb{R}^d)$, $\dot{H}^s(\mathbb{R}^d)$ ($H^s(\mathbb{R}^d)$) and $\dot{B}_{p,r}^s(\mathbb{R}^d)$ ($B_{p,r}^s(\mathbb{R}^d)$), respectively.   We shall denote by $(a|b)$ the $L^2$ inner product
of $a$ and $b$. ${\bf 1}$ stands for the characteristic function.

\vskip .4in
\section{ Preliminaries}
\label{s2}
In this section, we give some necessary definitions,  proposition and lemmas.
\vskip .1in
The fractional Laplacian operator $\Lambda^\alpha=(-\Delta)^\frac{\alpha}{2}$ $(\alpha\ge0)$ is defined through the Fourier transform, namely,
$$\widehat{\Lambda^\alpha f}(\xi)=|\xi|^\alpha \widehat{f}(\xi),$$
where the Fourier transform is given by
$$\widehat{f}(\xi)=\int_{\mathbb{R}^d}e^{-ix\cdot\xi}f(x)dx.$$
Let $\mathfrak{B}=\{\xi\in\mathbb{R}^d,\ |\xi|\le\frac{4}{3}\}$ and $\mathfrak{C}=\{\xi\in\mathbb{R}^d,\ \frac{3}{4}\le|\xi|\le\frac{8}{3}\}$. Choose two nonnegative smooth radial function $\chi,\ \varphi$ supported, respectively, in $\mathfrak{B}$ and $\mathfrak{C}$ such that
$$\chi(\xi)+\sum_{j\ge0}\varphi(2^{-j}\xi)=1,\ \ \xi\in\mathbb{R}^d,$$
$$\sum_{j\in\mathbb{Z}}\varphi(2^{-j}\xi)=1,\ \ \xi\in\mathbb{R}^d\setminus\{0\}.$$
We denote $\varphi_{j}=\varphi(2^{-j}\xi),$ $h=\mathfrak{F}^{-1}\varphi$ and $\tilde{h}=\mathfrak{F}^{-1}\chi,$ where $\mathfrak{F}^{-1}$ stands for the inverse Fourier transform. Then the dyadic blocks
$\Delta_{j}$, $\dot{\Delta}_j$, $S_j$ and $\dot{S}_{j}$ can be defined as follows
\begin{equation*}
\left\{
\begin{aligned}
& \Delta_{j}f=\varphi(2^{-j}D)f=2^{jd}\int_{\mathbb{R}^d}h(2^jy)f(x-y)dy, \ j\ge0,  \\
& \Delta_{-1}f=\chi(D)f=\mathfrak{F}^{-1}(\chi(\xi)\widehat{f}(\xi)), \ \Delta_j f=0,\ j\le -2, \\
&  S_j f=\sum_{k\le j-1}\Delta_k f=\chi(2^{-j}D)f=2^{jd}\int_{\mathbb{R}^d}\tilde{h}(2^jy)f(x-y)dy,\ j\ge0,\\
&  S_j f=0,\ j\le -1,
\end{aligned}
\right.
\end{equation*}

\begin{equation*}
\left\{
\begin{aligned}
& \dot{\Delta}_{j}f=\varphi(2^{-j}D)f=2^{jd}\int_{\mathbb{R}^d}h(2^jy)f(x-y)dy, \ j\in\mathbb{Z},  \\
&  \dot{S}_j f=\chi(2^{-j}D)f=2^{jd}\int_{\mathbb{R}^d}\tilde{h}(2^jy)f(x-y)dy,\ j\in\mathbb{Z}.
\end{aligned}
\right.
\end{equation*}
We   easily verifies that with our choice of $\varphi$ and $\chi$,
$$\Delta_{j}\Delta_{k}f=0\ {\rm if} \ |j-k|\ge2\ \ {\rm and}\ \  \Delta_{j}(S_{k-1}f\Delta_{k}f)=0\  {\rm if}\  |j-k|\ge5.$$
$$\dot{\Delta}_{j}\dot{\Delta}_{k}f=0\ {\rm if} \ |j-k|\ge2\ \ {\rm and}\ \  \dot{\Delta}_{j}(\dot{S}_{k-1}f\dot{\Delta}_{k}f)=0\  {\rm if}\  |j-k|\ge5.$$
Let us recall the definition of  the homogeneous and inhomogeneous Besov spaces.
\begin{define}\label{HB}
 Let $s\in \mathbb{R}$, $(p,q)\in[1,\infty]^2,$ the homogeneous Besov space $\dot{B}_{p,q}^s(\R^d)$ is defined by
$$\dot{B}_{p,q}^{s}(\mathbb{R}^d)=\{f\in \mathfrak{S}'(\R^d);\ \|f\|_{\dot{B}_{p,q}^{s}(\R^d)}<\infty\},$$
where
\begin{equation*}
\|f\|_{\dot{B}_{p,q}^s(\R^d)}=\left\{\begin{aligned}
&\displaystyle (\sum_{j\in \mathbb{Z}}2^{sqj}\|\dot{\Delta}_{j}f\|_{L^p(\R^d)}^{q})^\frac{1}{q},\ \ \ \ {\rm for} \ \ 1\le q<\infty,\\
&\displaystyle \sup_{j\in\mathbb{Z}}2^{sj}\|\dot{\Delta}_{j}f\|_{L^p(\R^d)},\ \ \ \ \ \ \ \ {\rm for}\ \ q=\infty,\\
\end{aligned}
\right.
\end{equation*}
and $\mathfrak{S}'(\mathbb{R}^d)$ denotes the dual space of $\mathfrak{S}(\R^d)=\{f\in\mathcal{S}(\mathbb{R}^d);\ \partial^{\alpha}\hat{f}(0)=0;\ \forall\ \alpha\in \ \mathbb{N}^d $\ {\rm multi-index}\} and can be identified by the quotient space of $\mathcal{S'}/\mathcal{P}$ with the polynomials space $\mathcal{P}$.
\end{define}

\begin{define}\label{INB}
 Let $s\in \mathbb{R}$ and $(p,q)\in [1,\infty]^2$, the inhomogeneous Besov space $B_{p,q}^s(\mathbb{R}^d)$ is defined by
$${B}_{p,q}^{s}(\mathbb{R}^d)=\{f\in {S}'(\R^d);\ \|f\|_{{B}_{p,q}^{s}(\R^d)}<\infty\},$$
where
\begin{equation*}
\|f\|_{{B}_{p,q}^s(\R^d)}=\left\{\begin{aligned}
&\displaystyle (\sum_{j\in \mathbb{Z}}2^{sqj}\|\Delta_{j}f\|_{L^p(\R^d)}^{q})^\frac{1}{q},\ \ \ \ {\rm for} \ \ 1\le q<\infty,\\
&\displaystyle \sup_{j\in\mathbb{Z}}2^{sj}\|\Delta_{j}f\|_{L^p(\R^d)},\ \ \ \ \ \ \ \ {\rm for}\ \ q=\infty.\\
\end{aligned}
\right.
\end{equation*}
\end{define}

For the special case $p=q=2$, we have
$$\|f\|_{\dot{H}^s(\R^d)} \thickapprox \|f\|_{\dot{B}_{2,2}^s(\R^d)},\ \|f\|_{{H}^s(\R^d)} \thickapprox \|f\|_{{B}_{2,2}^s(\R^d)},$$
and if $s<0$, one can check that
\begin{equation}\label{E}
\|f\|_{\dot{B}_{p,q}^s(\R^d)}\thickapprox \left\|2^{js}\|\dot{S}_j f\|_{L^p(\R^d)}\right\|_{l^q(\Z)},\ \|f\|_{{B}_{p,q}^s(\R^d)} \thickapprox\left\|2^{js}\|S_j f\|_{L^p(\R^d)}\right\|_{l^q(\Z)},
\end{equation}
where $a\thickapprox b$ means $C^{-1}b\le a\le C b$ for some positive  constant $C$.
\vskip .1in
The $\dot{H}^s(\R^d)$  and $H^s(\R^d)$ ($s>0$) norm of $f$ can be also defined as follows:
$$\|f\|_{\dot{H}^s(\R^d)}:{=} \|\Lambda^s f\|_{L^2(\R^d)}$$
and
$$\|f\|_{H^s(\R^d)}:{=} \| f\|_{L^2(\R^d)}+\|\Lambda^s f\|_{L^2(\R^d)}\thickapprox \| f\|_{L^2(\R^d)}+\|f\|_{\tilde{H}^s(\R^d)},$$
where
$$\|f\|_{\tilde{H}^s(\R^d)}^2:=\sum_{j\ge 0}2^{2js}\|\Delta_j f\|_{L^2(\R^d)}^2.$$

The following proposition and lemmas
provide Bernstein type inequalities for fractional derivatives and commutator estimates.
\begin{prop}
Let $\gamma\ge0$. Let $1\le p\le q\le \infty$.
\begin{enumerate}
\item[1)] If $f$ satisfies
$$
\mbox{supp}\, \widehat{f} \subset \{\xi\in \mathbb{R}^d: \,\, |\xi|
\le \mathcal{K} 2^j \},
$$
for some integer $j$ and a constant $\mathcal{K}>0$, then
$$
\|(-\Delta)^\gamma f\|_{L^q(\mathbb{R}^d)} \le C_1(\gamma,p,q)\, 2^{2\gamma j +
j d(\frac{1}{p}-\frac{1}{q})} \|f\|_{L^p(\mathbb{R}^d)}.
$$
\item[2)] If $f$ satisfies
\begin{equation*}\label{spp}
\mbox{supp}\, \widehat{f} \subset \{\xi\in \mathbb{R}^d: \,\, \mathcal{K}_12^j
\le |\xi| \le \mathcal{K}_2 2^j \}
\end{equation*}
for some integer $j$ and constants $0<\mathcal{K}_1\le \mathcal{K}_2$, then
$$
C_1(\gamma,p,q)\, 2^{2\gamma j} \|f\|_{L^q(\mathbb{R}^d)} \le \|(-\Delta)^\gamma
f\|_{L^q(\mathbb{R}^d)} \le C_2(\gamma,p,q)\, 2^{2\gamma j +
j d(\frac{1}{p}-\frac{1}{q})} \|f\|_{L^p(\mathbb{R}^d)}.
$$
\end{enumerate}
\end{prop}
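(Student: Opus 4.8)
The plan is to realize $(-\Delta)^\gamma$, acting on a frequency-localized function, as convolution with an explicit rescaled kernel, and then read off every estimate from Young's convolution inequality together with a single scaling computation. For part 1), since $\widehat{f}$ is supported in $\{|\xi|\le \mathcal{K}2^j\}$, I fix once and for all a function $\psi\in C_c^\infty(\mathbb{R}^d)$ with $\psi\equiv 1$ on $\{|\xi|\le \mathcal{K}\}$. Then $\psi(2^{-j}\xi)\equiv 1$ on $\operatorname{supp}\widehat{f}$, so $\widehat{(-\Delta)^\gamma f}(\xi)=|\xi|^{2\gamma}\psi(2^{-j}\xi)\widehat{f}(\xi)$, i.e. $(-\Delta)^\gamma f=K_j\ast f$ with $\widehat{K_j}(\xi)=|\xi|^{2\gamma}\psi(2^{-j}\xi)$. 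Writing $g(\xi):=|\xi|^{2\gamma}\psi(\xi)$ and $G:=\mathfrak{F}^{-1}g$, the dilation rule for the Fourier transform gives $K_j(x)=2^{(2\gamma+d)j}G(2^jx)$.

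Applying Young's inequality $\|K_j\ast f\|_{L^q}\le \|K_j\|_{L^r}\|f\|_{L^p}$ with $\tfrac1r=1+\tfrac1q-\tfrac1p$ (here $r\ge1$ because $p\le q$), and using the rescaling identity $\|K_j\|_{L^r}=2^{2\gamma j+jd(1-1/r)}\|G\|_{L^r}=2^{2\gamma j+jd(1/p-1/q)}\|G\|_{L^r}$, produces exactly the claimed inequality with $C_1(\gamma,p,q)=\|G\|_{L^r}$. The crux is thus to verify that $G=\mathfrak{F}^{-1}[\,|\xi|^{2\gamma}\psi(\xi)\,]\in L^r$ for every admissible $r\in[1,\infty]$, the borderline case being $p=q$ (hence $r=1$). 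Since $g$ is bounded and compactly supported we have $G\in L^\infty$ with $\|G\|_{L^\infty}\le \|g\|_{L^1}$. For integrability at infinity I would exploit that near the origin $g$ coincides with the homogeneous symbol $|\xi|^{2\gamma}$, whose distributional inverse Fourier transform is $c_{d,\gamma}|x|^{-d-2\gamma}$, while $g-|\xi|^{2\gamma}\chi$ (for a cutoff $\chi$ near $0$) is $C_c^\infty$ and contributes a Schwartz, rapidly decaying term; consequently $|G(x)|\lesssim |x|^{-d-2\gamma}$ for large $|x|$, and since $d+2\gamma>d$ this yields $G\in L^1$, whence $G\in L^r$ for all $r\in[1,\infty]$ by interpolation. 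This homogeneity-and-decay analysis is the one genuinely nontrivial ingredient, since for non-integer $\gamma$ the symbol $|\xi|^{2\gamma}$ fails to be smooth at the origin and a naive integration-by-parts argument loses too much decay; I expect it to be the main obstacle, everything else being scaling bookkeeping.

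For part 2) the upper bound is immediate, because the annulus $\{\mathcal{K}_12^j\le|\xi|\le\mathcal{K}_22^j\}$ sits inside the ball $\{|\xi|\le \mathcal{K}_22^j\}$, so part 1) applies verbatim. For the lower bound I would invert the operator: choose $\phi\in C_c^\infty(\mathbb{R}^d)$ supported away from the origin with $\phi\equiv1$ on $\{\mathcal{K}_1\le|\xi|\le\mathcal{K}_2\}$, so that $\widehat{f}(\xi)=|\xi|^{-2\gamma}\phi(2^{-j}\xi)\,\widehat{(-\Delta)^\gamma f}(\xi)$, i.e. $f=\widetilde{K}_j\ast (-\Delta)^\gamma f$ with $\widehat{\widetilde{K}_j}(\xi)=|\xi|^{-2\gamma}\phi(2^{-j}\xi)$. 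Now the multiplier $|\xi|^{-2\gamma}\phi(\xi)$ is smooth and compactly supported, the cutoff having removed the origin singularity, so $H:=\mathfrak{F}^{-1}[\,|\cdot|^{-2\gamma}\phi\,]$ is Schwartz and in particular $H\in L^1$. The same rescaling computation as above, now with $r=1$ and exponent $-2\gamma$, gives $\|\widetilde{K}_j\|_{L^1}=2^{-2\gamma j}\|H\|_{L^1}$, and Young's inequality yields $\|f\|_{L^q}\le 2^{-2\gamma j}\|H\|_{L^1}\,\|(-\Delta)^\gamma f\|_{L^q}$, which is the asserted lower bound with $C_1(\gamma,p,q)=\|H\|_{L^1}^{-1}$. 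In summary, both directions reduce to Young's inequality after the dilation identity, and the only analytic work lies in the $L^1$ decay estimate for $G$ in part 1).
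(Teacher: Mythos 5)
Your overall strategy is sound, and in fact it is the standard proof of this fractional Bernstein inequality; note that the paper itself offers no proof to compare against --- it states the Proposition in Section 2 as a known fact (it is folklore in this literature, cf.\ the references [CMZ1], [WuJMFM], [BCD] cited nearby), so the only question is whether your argument stands on its own. Part 2) is complete and correct as written: both the reduction of the upper bound to Part 1) and the inversion $f=\widetilde{K}_j\ast(-\Delta)^\gamma f$ with the smooth compactly supported symbol $|\xi|^{-2\gamma}\phi(\xi)$ are exactly right, and the scaling bookkeeping is accurate.

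In Part 1) you correctly isolate the genuine issue ($G=\mathfrak{F}^{-1}\left[|\xi|^{2\gamma}\psi\right]\in L^1$ despite the non-smoothness of the symbol at $\xi=0$), but the step you label ``consequently'' hides the one nontrivial assertion. Knowing that the distributional inverse Fourier transform of $|\xi|^{2\gamma}$ is $c_{d,\gamma}\,\mathrm{f.p.}\,|x|^{-d-2\gamma}$ does not by itself control $\mathfrak{F}^{-1}\left[|\xi|^{2\gamma}\chi\right]$: the error term is $\mathfrak{F}^{-1}\left[|\xi|^{2\gamma}(1-\chi)\right]$, and $|\xi|^{2\gamma}(1-\chi)$ is neither compactly supported nor Schwartz, so its contribution is not automatically ``rapidly decaying.'' What saves the argument is that this symbol vanishes near the origin and obeys $|\partial^a(|\xi|^{2\gamma}(1-\chi))|\le C_a|\xi|^{2\gamma-|a|}$, so repeated integration by parts ($|x|^{2N}\mathfrak{F}^{-1}m=\mathfrak{F}^{-1}[(-\Delta)^N m]$ with $2N>2\gamma+d$) shows its inverse transform decays faster than any power away from $x=0$; only then do you get $|G(x)|\lesssim|x|^{-d-2\gamma}$ at infinity. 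Alternatively you can bypass homogeneous distributions entirely by the dyadic decomposition $|\xi|^{2\gamma}\psi=\sum_{k\le C}|\xi|^{2\gamma}\varphi(2^{-k}\xi)\psi$, where each piece has inverse-transform $L^1$ norm $\lesssim 2^{2\gamma k}$ by scaling, and the sum converges precisely because $\gamma>0$. Finally, two edge cases need a word: $\gamma=0$ (your decay exponent $d+2\gamma=d$ is not integrable, but then $g=\psi$ is already $C_c^\infty$ and $G$ is Schwartz) and $2\gamma$ an even integer (the constant $c_{d,\gamma}$ degenerates, but then $|\xi|^{2\gamma}\psi$ is $C_c^\infty$ and the claim is again trivial). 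With those repairs the proof is complete.
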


\begin{lemma} \cite{BCD}
Let $\theta$ be a $C^1$ function on $\R^d$ such that $(1+|\cdot|)\widehat{\theta}\in L^1(\R^d).$ There exists a constant $C$ such that for any Lipschitz function
$a$ with gradient in $L^p(\R^d)$ and any function $b$ in $L^q(\R^d)$, we have for any positive $\lambda$,
\begin{equation}\label{SCE}
\|[\theta(\lambda^{-1}D),a]b\|_{L^r(\R^d)}\le C\lambda^{-1}\|\nabla a\|_{L^p(\R^d)}\|b\|_{L^q(\R^d)}, \ \ {\rm with}\ \frac{1}{p}+\frac{1}{q}=\frac{1}{r}.
\end{equation}
\end{lemma}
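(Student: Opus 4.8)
The plan is to represent $\theta(\lambda^{-1}D)$ as a convolution operator and then exploit the cancellation built into the commutator. Writing $k:=\mathfrak{F}^{-1}\theta$, the scaling law for the Fourier transform shows that $\theta(\lambda^{-1}D)$ acts by convolution against the kernel $K_\lambda(x)=\lambda^d k(\lambda x)$; note that the hypothesis $(1+|\cdot|)\widehat{\theta}\in L^1$ is equivalent, up to reflection and a harmless constant, to $(1+|\cdot|)\,k\in L^1$, so that both $k$ and $|\cdot|\,k$ lie in $L^1(\R^d)$. A direct computation then yields the pointwise identity
$$[\theta(\lambda^{-1}D),a]b(x)=\int_{\R^d}\lambda^d k(\lambda(x-y))\,\bigl(a(y)-a(x)\bigr)\,b(y)\,dy,$$
in which subtraction of $a(x)\,\theta(\lambda^{-1}D)b$ has produced the crucial difference $a(y)-a(x)$.

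The next step is to linearize this difference. By the fundamental theorem of calculus, $a(y)-a(x)=(y-x)\cdot\int_0^1\na a\bigl(x+s(y-x)\bigr)\,ds$, and after the substitution $z=x-y$ the commutator becomes
$$[\theta(\lambda^{-1}D),a]b(x)=\int_0^1\!\!\int_{\R^d}\Phi_\lambda(z)\cdot\na a(x-sz)\,b(x-z)\,dz\,ds,\qquad \Phi_\lambda(z):=-\lambda^d z\,k(\lambda z).$$
I would then estimate, for each fixed $s\in[0,1]$, the inner integral $I_s(x)$ in $L^r$. The one subtlety is that the two factors are evaluated at the distinct shifts $x-sz$ and $x-z$, which blocks a one-line application of Young's inequality; instead I split the weight as $|\Phi_\lambda|=|\Phi_\lambda|^{1/r'}\cdot|\Phi_\lambda|^{1/p}\cdot|\Phi_\lambda|^{1/q}$ and apply H\"older in $z$ with the three exponents $r',p,q$ (legitimate since $\tfrac1{r'}+\tfrac1p+\tfrac1q=1$). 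This produces
$$|I_s(x)|\le\|\Phi_\lambda\|_{L^1}^{1/r'}\Bigl(\int|\Phi_\lambda(z)|\,|\na a(x-sz)|^p\,dz\Bigr)^{1/p}\Bigl(\int|\Phi_\lambda(z)|\,|b(x-z)|^q\,dz\Bigr)^{1/q}.$$

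Finally I would take the $L^r$ norm in $x$: a second application of H\"older, with exponents $p,q$ and $\tfrac1r=\tfrac1p+\tfrac1q$, separates the two remaining factors, and Tonelli together with the translation invariance of Lebesgue measure evaluates each one, e.g. $\int\!\int|\Phi_\lambda(z)|\,|\na a(x-sz)|^p\,dz\,dx=\|\Phi_\lambda\|_{L^1}\|\na a\|_{L^p}^p$. Collecting the powers of $\|\Phi_\lambda\|_{L^1}$ (whose exponents sum to $\tfrac1{r'}+\tfrac1p+\tfrac1q=1$) gives $\|I_s\|_{L^r}\le\|\Phi_\lambda\|_{L^1}\|\na a\|_{L^p}\|b\|_{L^q}$ uniformly in $s$, so Minkowski's inequality in the $s$-integral preserves the bound. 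The argument closes by the scaling computation $\|\Phi_\lambda\|_{L^1}=\int\lambda^d|z|\,|k(\lambda z)|\,dz=\lambda^{-1}\bigl\||\cdot|\,k\bigr\|_{L^1}$, which is finite by hypothesis and supplies exactly the decisive factor $\lambda^{-1}$. The main obstacle I anticipate is organizing these two H\"older steps so that the mismatched translations $x-sz$ and $x-z$ can still be decoupled and absorbed by translation invariance; once that bookkeeping is in place, the gain $\lambda^{-1}$ is automatic from the first moment of the kernel.
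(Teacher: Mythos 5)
Your proof is correct, and it is essentially the argument behind the result as cited: the paper itself gives no proof of this lemma (it is quoted directly from [BCD], Lemma 2.97), and your chain — convolution-kernel representation of $\theta(\lambda^{-1}D)$, first-order Taylor expansion producing the kernel $\Phi_\lambda(z)=-\lambda^d z\,k(\lambda z)$, H\"older/Young-type bookkeeping to decouple the shifted factors, and the scaling identity $\|\Phi_\lambda\|_{L^1}=\lambda^{-1}\|\,|\cdot|\,k\|_{L^1}$ — is exactly the standard proof given there. The only implicit points are routine: the identification of $(1+|\cdot|)\widehat{\theta}\in L^1$ with $(1+|\cdot|)\mathfrak{F}^{-1}\theta\in L^1$ (reflection under the paper's Fourier convention) and the validity of the Taylor formula for Lipschitz $a$ (by Rademacher's theorem or by mollification), both of which you may take as standard.
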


\begin{lemma}\cite{Wan,WuJMFM}
Let $1\le p,p_1,p_2\le \infty$ satisfying $1+\frac{1}{p}=\frac{1}{p_1}+\frac{1}{p_2}.$ If $xh\in L^{p_1}(\R^d)$, $\nabla f\in L^\infty (\R^d)$ and
$g\in L^{p_2}(\R^d)$, then
\begin{equation}\label{WuJMFM}
\|h\star(fg)-f(h\star g)\|_{L^{p}(\R^d)}\le C\|xh\|_{L^{p_1}(\R^d)}\|\nabla f\|_{L^\infty(\R^d)}\|g\|_{L^{p_2}(\R^d)},
\end{equation}
where $C$ is a constant independent of $f,g,h$.
\end{lemma}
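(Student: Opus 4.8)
The plan is to reduce the estimate to Young's convolution inequality by isolating the increment of $f$ inside a single integral. Writing out the convolution, for a.e.\ $x\in\R^d$ we have
\begin{equation*}
\bigl(h\star(fg)\bigr)(x)-f(x)(h\star g)(x)=\int_{\R^d}h(x-y)\bigl(f(y)-f(x)\bigr)g(y)\,dy,
\end{equation*}
since the factor $f(x)$ may be pulled inside the integral defining $f(x)(h\star g)(x)$. This is the one genuinely structural step: after this manipulation the difference of the two terms is controlled entirely by the increment $f(y)-f(x)$, which is where the hypothesis $\na f\in L^\infty$ will enter.

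Next I would exploit that $f$ is Lipschitz. Since $\na f\in L^\infty$, the mean value inequality gives the pointwise bound $|f(y)-f(x)|\le\|\na f\|_{L^\infty}\,|x-y|$ for a.e.\ $x,y$. Substituting this into the integral above yields
\begin{equation*}
\bigl|\bigl(h\star(fg)\bigr)(x)-f(x)(h\star g)(x)\bigr|\le\|\na f\|_{L^\infty}\int_{\R^d}|x-y|\,|h(x-y)|\,|g(y)|\,dy=\|\na f\|_{L^\infty}\bigl(H\star|g|\bigr)(x),
\end{equation*}
where I set $H(z):=|z|\,|h(z)|$, so that $\|H\|_{L^{p_1}}\approx\|xh\|_{L^{p_1}}$ up to a dimensional constant.

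Finally, with the exponent relation $1+\frac1p=\frac1{p_1}+\frac1{p_2}$ in force, Young's inequality for convolutions gives $\|H\star|g|\|_{L^p}\le\|H\|_{L^{p_1}}\|g\|_{L^{p_2}}$, and combining the last two displays produces exactly (\ref{WuJMFM}). I do not anticipate any real obstacle: the estimate is essentially a clean pairing of the mean value inequality with Young's inequality. The only points deserving care are the vector-valued interpretation of $xh$, so that $\|H\|_{L^{p_1}}$ and $\|xh\|_{L^{p_1}}$ agree up to a constant absorbed into $C$, and the fact that the mean value bound requires only $f$ Lipschitz rather than $C^1$. Since neither step sees the particular shape of $f$, $g$, or $h$, the resulting constant $C$ is independent of them, as asserted.
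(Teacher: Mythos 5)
Your proposal is correct: the identity $h\star(fg)(x)-f(x)(h\star g)(x)=\int_{\R^d}h(x-y)\bigl(f(y)-f(x)\bigr)g(y)\,dy$, the Lipschitz bound $|f(y)-f(x)|\le\|\nabla f\|_{L^\infty}|x-y|$, and Young's inequality applied to $H(z)=|z|\,|h(z)|$ (whose $L^{p_1}$ norm coincides with $\|xh\|_{L^{p_1}}$ up to a dimensional constant) give exactly (\ref{WuJMFM}). The paper itself states this lemma without proof, citing \cite{Wan,WuJMFM}, and your argument is precisely the standard one found in those references, so the two approaches agree.
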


For more details about Besov space and Sobolev space such as some useful embedding inequalities, we refer to
\cite{BCD}, \cite{Grafakos} and \cite{Stein}.\\

In the following proof, we shall frequently use homogeneous and inhomogeneous Bony's decomposition. For the homogeneous Bony's decomposition,
if $u,v\in \mathfrak{S}'(\R^d)$,
$$uv=\sum_{j\in \mathbb{Z}}\dot{S}_{j-1}u\dot{\Delta}_{j}v+\sum_{j\in \mathbb{Z}}\dot{\Delta}_{j}u\dot{S}_{j-1}v+\sum_{j\in\mathbb{Z}}\dot{\Delta}_{j}u\tilde{\dot{\Delta}}_{j}v,$$
where $\tilde{\dot{\Delta}}_{j}=\dot{\Delta}_{j-1}+\dot{\Delta}_{j}+\dot{\Delta}_{j+1},$ which is   frequently applied to split the commutator $\Upsilon_1=[\dot{\Delta}_{j}, u]v$, namely,
\begin{equation*}
\begin{aligned}
\Upsilon_1=&\sum_{|k-j|\le 4}[\dot{\Delta}_{j},\dot{S}_{k-1}u]\dot{\Delta}_{k}v+\sum_{|k-j|\le 4}\dot{\Delta}_{j}(\dot{\Delta}_{k}u\ \dot{S}_{k-1}v)\\
&+\sum_{k\ge j-2}\dot{\Delta}_{k}u\ \dot{\Delta}_{j}\dot{S}_{k+2}v+\sum_{k\ge j-3}\dot{\Delta}_{j}(\dot{\Delta}_{k}u\ \tilde{\dot{\Delta}}_{k}v).
\end{aligned}
\end{equation*}
Similarly, for the inhomogeneous Bony's decomposition, if $u,v\in \mathcal{S}'(\R^d)$
$$uv=\sum_{j\in \mathbb{Z}}S_{j-1}u\Delta_{j}v+\sum_{j\in \mathbb{Z}}\Delta_{j}u S_{j-1}v+\sum_{j\in\mathbb{Z}}\Delta_{j}u\tilde{\Delta}_{j}v,$$
where $\tilde{\Delta}_{j}=\Delta_{j-1}+\Delta_{j}+\Delta_{j+1},$ which is also  frequently applied to split the commutator $\Upsilon_2=[\Delta_{j}, u]v$, namely,
\begin{equation*}
\begin{aligned}
\Upsilon_2=&\sum_{|k-j|\le 4}[\Delta_{j},S_{k-1}u]\Delta_{k}v+\sum_{|k-j|\le 4}\Delta_{j}(\Delta_{k}u\ S_{k-1}v)\\
&+\sum_{k\ge j-2}\Delta_{k}u\ \Delta_{j}S_{k+2}v+\sum_{k\ge j-3}\Delta_{j}(\Delta_{k}u\ \tilde{\Delta}_{k}v).
\end{aligned}
\end{equation*}

\vskip .1in

The rest of this section is devoted to the proof of some new commutator estimates.
\begin{lemma}\label{lemma1}
($a$) Let $\alpha>0$ and  $s>0$, there holds
\begin{equation}\label{2.1}
\sum_{j\ge0}2^{2js}([\Delta_j,u\cdot\nabla]u|\Delta_j u)\le C\|\nabla u\|_{B_{\infty,\infty}^{-\alpha}}\|u\|_{H^s}\|u\|_{H^{s+\alpha}}.
\end{equation}
($b$) Let $\alpha>0,$ $\beta>0$ and $s>0$, there holds
\begin{equation}\label{2.2}
\begin{aligned}
\sum_{j\ge0}2^{2js}&([\Delta_j,u\cdot\nabla]\tau|\Delta_j \tau)\\
\le& C\left\{\|\nabla u\|_{B_{\infty,\infty}^{-\beta}}\|\tau\|_{H^s}\|\tau\|_{H^{s+\beta}}+
\|\nabla \tau\|_{B_{\infty,\infty}^{-\alpha}}\|\tau\|_{H^s}\|u\|_{H^{s+\alpha}}
\right\}.
\end{aligned}
\end{equation}
($c$) Let $s_2>0$ and $s_2-s_1-\alpha<0$, there holds
\begin{equation}\label{2.3}
\begin{aligned}
\sum_{j\ge0}2^{2js}&([\Delta_j,u\cdot\nabla]\tau|\Delta_j \tau)\\
\le& C\left\{\|\nabla u\|_{L^\infty}\|\tau\|_{H^{s_2}}^2+
\|\nabla \tau\|_{B_{\infty,\infty}^{s_2-s_1-\alpha}}\|\tau\|_{H^{s_2}}\|u\|_{H^{s_1+\alpha}}
\right\}.
\end{aligned}
\end{equation}
\end{lemma}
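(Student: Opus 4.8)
The three estimates share a common engine: apply the inhomogeneous Bony decomposition to write the commutator $\Upsilon_2=[\Delta_j,u\cdot\nabla]f$ (with $f=u$ or $f=\tau$) as the four sums displayed just before the lemma, take the $L^2$ inner product against $\Delta_j f$, multiply by $2^{2js}$, and sum over $j\ge0$. The strategy is to estimate each of the four pieces separately using the commutator bound \eqref{SCE}, Bernstein's inequalities, the almost-orthogonality of the Littlewood--Paley blocks, and finally a discrete Young/Cauchy--Schwarz inequality in $j$ to recombine the dyadic pieces into the stated norms. The role of the negative-index Besov norms $\|\nabla u\|_{B_{\infty,\infty}^{-\alpha}}$ (and $\|\nabla\tau\|_{B_{\infty,\infty}^{-\alpha}}$) is exactly to absorb the low-frequency factor while the positive regularity is distributed between the two $H^s$/$H^{s+\alpha}$ norms so that the $\alpha$-derivatives land where there is room; the divergence-free condition $\operatorname{div}u=0$ lets me move one derivative freely and keep the structure of the paraproduct terms clean.

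For part $(a)$, the paraproduct term $\sum_{|k-j|\le4}[\Delta_j,S_{k-1}u]\Delta_k\nabla u$ is the one where the commutator structure is essential: I would apply \eqref{SCE} with $\theta$ the symbol of $\Delta_j$ and $\lambda=2^j$ to gain the factor $2^{-j}$, pairing $\|\nabla S_{k-1}u\|_{L^\infty}$ against $\|\Delta_k\nabla u\|_{L^2}$; the low-frequency Lipschitz norm $\|\nabla S_{k-1}u\|_{L^\infty}$ is controlled by $\|\nabla u\|_{B_{\infty,\infty}^{-\alpha}}$ after paying $2^{k\alpha}$, and that $2^{k\alpha}$ then supplies the extra $\alpha$ derivatives on the high-frequency factor producing $\|u\|_{H^{s+\alpha}}$. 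The remaining three sums (the reverse paraproduct and the two remainder-type terms) contain no genuine commutator and are treated by Bernstein plus Hölder, splitting the $2^{2js}$ weight as $2^{js}\cdot 2^{js}$ and distributing frequencies so that one factor carries the $s$-weight and the other carries $s+\alpha$; the $2^{-j}$ gains and the summability are packaged through the generic $l^2$ sequences $(c_j)_{j\in\Z}$. Part $(b)$ is structurally identical but bilinear in two unknowns: the term where $u$ is the low factor produces $\|\nabla u\|_{B_{\infty,\infty}^{-\beta}}\|\tau\|_{H^s}\|\tau\|_{H^{s+\beta}}$, while the term where $\tau$ is the low factor (so that the transported roughness sits on $u$) produces $\|\nabla\tau\|_{B_{\infty,\infty}^{-\alpha}}\|\tau\|_{H^s}\|u\|_{H^{s+\alpha}}$.

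Part $(c)$ is a variant in which one does not ask for a negative index on $\nabla u$ but only an $L^\infty$ bound; here the first output term $\|\nabla u\|_{L^\infty}\|\tau\|_{H^{s_2}}^2$ comes from the paraproduct and reverse-paraproduct pieces in which $u$ is always differentiated at low or comparable frequency, so the classical Kato--Ponce/commutator estimate with $\theta,\lambda=2^j$ gives the $2^{-j}$ gain directly against $\|\nabla u\|_{L^\infty}$, and both surviving $\tau$ factors sit at regularity $s_2$. The genuinely new term is the remainder piece carrying $\nabla\tau$ at low frequency: I would place $\nabla\tau$ in $B_{\infty,\infty}^{s_2-s_1-\alpha}$ — the hypothesis $s_2-s_1-\alpha<0$ guarantees this is a negative-index (hence summable against high frequencies of $u$) norm — which lets me convert the paid $2^{k(s_1+\alpha-s_2)}$ into the $\|u\|_{H^{s_1+\alpha}}$ factor while the other $\tau$ retains $H^{s_2}$, and $s_2>0$ ensures the $j\ge0$ sum converges.

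\emph{Main obstacle.} The delicate point in every part is the bookkeeping of the two remainder sums $\sum_{k\ge j-2}$ and $\sum_{k\ge j-3}$, where $k$ ranges over arbitrarily high frequencies: one must verify that after distributing the regularity weights the resulting double sum over $j$ and $k$ is summable, which forces the precise inequalities $\alpha>0$, $\beta>0$, $s>0$ (and $s_2>0$, $s_2-s_1-\alpha<0$ in part $(c)$) rather than mere nonnegativity. Checking that the exponent of $2^{k-j}$ in these tails is strictly negative — so that the geometric series converges and yields an $l^2$ sequence $(c_j)$ — is exactly where the hypotheses are consumed, and getting those exponents right is the crux of the argument.
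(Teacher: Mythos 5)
Your proposal follows essentially the same route as the paper's proof: the inhomogeneous Bony decomposition of $[\Delta_j,u\cdot\nabla]$ into the four sums, the commutator lemma (\ref{SCE}) with $\lambda=2^j$ on the paraproduct piece, absorption of the low-frequency factor by the negative-index Besov norm at the cost of $2^{k\alpha}$ (resp.\ $2^{k\beta}$, $2^{k(\alpha+s_1-s_2)}$), and Young's inequality for series on the tail sums $\sum_{k\ge j-2}$, $\sum_{k\ge j-3}$, which is exactly where the hypotheses $s>0$, $s+\alpha>0$ (resp.\ $s_2>0$, $s_2-s_1-\alpha<0$) are consumed, just as the paper does. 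The only slip is cosmetic: in part ($c$) the term $\|\nabla \tau\|_{B_{\infty,\infty}^{s_2-s_1-\alpha}}\|\tau\|_{H^{s_2}}\|u\|_{H^{s_1+\alpha}}$ arises from the reverse paraproduct $\sum_{|k-j|\le4}\Delta_j(\Delta_k u\cdot\nabla S_{k-1}\tau)$ (the piece where $\nabla\tau$ sits at low frequency, and where $s_2-s_1-\alpha<0$ is needed to sum the geometric series in $\|\nabla S_{k-1}\tau\|_{L^\infty}$) together with both remainder sums, while only the genuine commutator piece $I_{21}$ yields $\|\nabla u\|_{L^\infty}\|\tau\|_{H^{s_2}}^2$; this is a mislabeling of which Bony piece does what, not a gap in the mechanism you describe.
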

\begin{proof}[Proof of Lemma \ref{lemma1}]
($a$) By inhomogeneous Bony's decomposition,
\begin{equation}\label{2.4}
\begin{aligned}
|([\Delta_j,&u\cdot\nabla]u|\Delta_j u)|\le \|[\Delta_j, u\cdot\nabla]u\|_{L^2}\|\Delta_j u\|_{L^2}\\
\le& \|\Delta_j u\|_{L^2}\{\sum_{|k-j|\le4}\|[\Delta_j,S_{k-1}u\cdot\nabla]\Delta_k u\|_{L^2}+
\sum_{|k-j|\le4}\|\Delta_j(\Delta_k u\cdot\nabla S_{k-1}u)\|_{L^2}\\
&+\sum_{k\ge j-2}\|\Delta_k u\cdot\nabla \Delta_j S_{k+2}u\|_{L^2}+
\sum_{k\ge j-3}\|\Delta_j(\Delta_k u\cdot\nabla \tilde{\Delta}_k u)\|_{L^2}\}\\
:=& I_{11}+I_{12}+I_{13}+I_{14}.
\end{aligned}
\end{equation}
By H\"{o}lder's inequality, (\ref{SCE}) and (\ref{E}),
\begin{equation*}
\begin{aligned}
|I_{11}|\le& C\|\Delta_j u\|_{L^2}\sum_{|k-j|\le 4}\|\nabla S_{k-1}u\|_{L^\infty}\|\Delta_k u\|_{L^2}\\
\le& C\|\nabla u\|_{B_{\infty,\infty}^{-\alpha}}\|\Delta_j u\|_{L^2}\sum_{|k-j|\le4}2^{k\alpha}\|\Delta_k u\|_{L^2}\\
\le& C2^{-2js}\|\nabla u\|_{B_{\infty,\infty}^{-\alpha}}2^{js}\|\Delta_j u\|_{L^2}\sum_{|k-j|\le4}2^{(j-k)s}2^{k(s+\alpha)}\|\Delta_k u\|_{L^2}\\
\le& C 2^{-2js}c_j\|\nabla u\|_{B_{\infty,\infty}^{-\alpha}}\|u\|_{H^s}\|u\|_{H^{s+\alpha}}\sum_{|k-j|\le4}2^{(j-k)s}c_k\\
\le& C  2^{-2js}c_j^2\|\nabla u\|_{B_{\infty,\infty}^{-\alpha}}\|u\|_{H^s}\|u\|_{H^{s+\alpha}}.
\end{aligned}
\end{equation*}
Repeating the estimate of $I_{11}$,
$$|I_{12}|\le C 2^{-2js}c_j^2\|\nabla u\|_{B_{\infty,\infty}^{-\alpha}}\|u\|_{H^s}\|u\|_{H^{s+\alpha}}.$$
Similarly,  by H\"{o}lder's inequality,
\begin{equation*}
\begin{aligned}
|I_{13}|\le& C\|\nabla \Delta_j u\|_{L^\infty}\|\Delta_j u\|_{L^2}\sum_{k\ge j-2}\|\Delta_k u\|_{L^2}\\
\le& C2^{j\alpha}\|\nabla u\|_{B_{\infty,\infty}^{-\alpha}}\|\Delta_j u\|_{L^2}\sum_{k\ge j-2}\|\Delta_k u\|_{L^2}\\
\le& C2^{-2js}\|\nabla u\|_{B_{\infty,\infty}^{-\alpha}}2^{js}\|\Delta_j u\|_{L^2}\sum_{k\ge j-2}2^{(j-k)(s+\alpha)}2^{k(s+\alpha)}\|\Delta_k u\|_{L^2}\\
\le& C2^{-2js}c_j\|\nabla u\|_{B_{\infty,\infty}^{-\alpha}}\|u\|_{H^s}\|u\|_{H^{s+\alpha}}\sum_{k\ge j-2}2^{(j-k)(s+\alpha)}c_k\\
\le& C2^{-2js}c_j^2\|\nabla u\|_{B_{\infty,\infty}^{-\alpha}}\|u\|_{H^s}\|u\|_{H^{s+\alpha}}\ (s+\alpha>0)
\end{aligned}
\end{equation*}
and
\begin{equation*}
\begin{aligned}
|I_{14}|\le& C\|\Delta_j u\|_{L^2}\sum_{k\ge j-3}\|\nabla \tilde{\Delta}_k u\|_{L^\infty}\|\Delta_k u\|_{L^2}\\
\le& C \|\nabla u\|_{B_{\infty,\infty}^{-\alpha}}\|\Delta_j u\|_{L^2}\sum_{k\ge j-3}2^{k\alpha}\|\Delta_k u\|_{L^2}\\
\le& C 2^{-2js}\|\nabla u\|_{B_{\infty,\infty}^{-\alpha}}2^{js}\|\Delta_j u\|_{L^2}\sum_{k\ge j-3}2^{(j-k)s}2^{k(s+\alpha)}\|\Delta_k u\|_{L^2}\\
\le& C 2^{-2js}c_j\|\nabla u\|_{B_{\infty,\infty}^{-\alpha}}\|u\|_{H^s}\|u\|_{H^{s+\alpha}}\sum_{k\ge j-3}2^{(j-k)s}c_k,\\
\le& C 2^{-2js}c_j^2\|\nabla u\|_{B_{\infty,\infty}^{-\alpha}}\|u\|_{H^s}\|u\|_{H^{s+\alpha}} \ (s>0),
\end{aligned}
\end{equation*}
where we have used the following Young's inequalities for series for the estimates of $I_{13}$ and $I_{14}$, respectively,
$$\|\sum_{k\ge j-2}2^{(j-k)(s+\alpha)}c_k\|_{l^2(\Z)}\le C\|2^{k(s+\alpha)}\mathbf{1}_{k\le 2}\|_{l^1(\Z)}\|c_k\|_{l^2(\Z)}<\infty,\ s+\alpha>0,$$
$$\|\sum_{k\ge j-3}2^{(j-k)s}c_k\|_{l^2(\Z)}\le C\|2^{ks}\mathbf{1}_{k\le 3}\|_{l^1(\Z)}\|c_k\|_{l^2(\Z)}<\infty,\ s>0. $$
Collecting the estimates of $I_{1i}$, $i=1,2,3,4$ in (\ref{2.4}) yields (\ref{2.1}).\\
($b$) For the estimate of (\ref{2.2}), we use a similar procedure as the proof of (\ref{2.1}).
By inhomogeneous Bony's decomposition again,
\begin{equation}\label{2.5}
\begin{aligned}
|([\Delta_j,&u\cdot\nabla]\tau|\Delta_j \tau)|\le \|[\Delta_j, u\cdot\nabla]\tau\|_{L^2}\|\Delta_j \tau\|_{L^2}\\
\le& \|\Delta_j \tau\|_{L^2}\{\sum_{|k-j|\le4}\|[\Delta_j,S_{k-1}u\cdot\nabla]\Delta_k \tau\|_{L^2}+
\sum_{|k-j|\le4}\|\Delta_j(\Delta_k u\cdot\nabla S_{k-1}\tau)\|_{L^2}\\
&+\sum_{k\ge j-2}\|\Delta_k u\cdot\nabla \Delta_j S_{k+2}\tau\|_{L^2}+
\sum_{k\ge j-3}\|\Delta_j(\Delta_k u\cdot\nabla \tilde{\Delta}_k \tau)\|_{L^2}\}\\
:=& I_{21}+I_{22}+I_{23}+I_{24}.
\end{aligned}
\end{equation}
As the estimate of $I_{1i}$, $i=1,2,3,4$, $I_{2i}$  can be bounded as follows:
$$|I_{21}|\le C2^{-2js}c_j^2\|\nabla u\|_{B_{\infty,\infty}^{-\beta}}\|\tau\|_{H^s}\|\tau\|_{H^{s+\beta}},$$
$$|I_{22}|\le C2^{-2js}c_j^2\|\nabla \tau\|_{B_{\infty,\infty}^{-\alpha}}\|\tau\|_{H^s}\|u\|_{H^{s+\alpha}},$$
$$|I_{23}|\le C2^{-2js}c_j^2\|\nabla \tau\|_{B_{\infty,\infty}^{-\alpha}}\|\tau\|_{H^s}\|u\|_{H^{s+\alpha}},\  (s+\alpha>0),$$
$$|I_{24}|\le C2^{-2js}c_j^2\|\nabla \tau\|_{B_{\infty,\infty}^{-\alpha}}\|\tau\|_{H^s}\|u\|_{H^{s+\alpha}},\  (s>0).$$
(\ref{2.2}) an be derived  from inserting the above four estimates into (\ref{2.5}).\\
(c) We suffices to  give the new bound of $I_{2i}$, $i=1,2,3,4$.  By H\"{o}lder's inequality and (\ref{SCE}), we have
\begin{equation*}
\begin{aligned}
|I_{21}|\le& C\|\Delta_j \tau\|_{L^2}\sum_{|k-j|\le4}\|\nabla S_{k-1}u\|_{L^\infty}\|\Delta_k \tau\|_{L^2}\\
\le& C2^{-2js_2}\|\nabla u\|_{L^\infty}2^{js_2}\|\Delta_j \tau\|_{L^2}\sum_{|k-j|\le 4}2^{(j-k)s_2}2^{ks_2}\|\Delta_k \tau\|_{L^2}\\
\le& C2^{-2js_2}c_j\|\nabla u\|_{L^\infty}\|\tau\|_{H^{s_2}}^2\sum_{|k-j|\le 4}2^{(j-k)s_2}c_k\\
\le& C2^{-2js_2}c_j^2\|\nabla u\|_{L^\infty}\|\tau\|_{H^{s_2}}^2.
\end{aligned}
\end{equation*}
By H\"{o}lder's inequality  and (\ref{E}),
\begin{equation*}
\begin{aligned}
|I_{22}|\le& C\|\Delta_j \tau\|_{L^2}\sum_{|k-j|\le4}\|\nabla S_{k-1}\tau\|_{L^\infty}\|\Delta_k u\|_{L^2}\\
\le& C\|\nabla \tau\|_{B_{\infty,\infty}^{s_2-s_1-\alpha}}\|\Delta_j \tau\|_{L^2}\sum_{|k-j|\le4}2^{k(\alpha+s_1-s_2)}\|\Delta_k u\|_{L^2}\\
\le& C2^{-2js_2}\|\nabla \tau\|_{B_{\infty,\infty}^{s_2-s_1-\alpha}}2^{js_2}\|\Delta_j \tau\|_{L^2}\sum_{|k-j|\le4}2^{(j-k)s_2}2^{k(\alpha+s_1)}\|\Delta_k u\|_{L^2}\\
\le& C2^{-2js_2}c_j\|\nabla \tau\|_{B_{\infty,\infty}^{s_2-s_1-\alpha}}\|\tau\|_{H^{s_2}}\|u\|_{H^{s_1+\alpha}}\sum_{|k-j|\le4}2^{(j-k)s_2}c_k\\
\le& C2^{-2js_2}c_j^2\|\nabla \tau\|_{B_{\infty,\infty}^{s_2-s_1-\alpha}}\|\tau\|_{H^{s_2}}\|u\|_{H^{s_1+\alpha}}.
\end{aligned}
\end{equation*}
By H\"{o}lder's inequality and Young's inequality for series,
\begin{equation*}
\begin{aligned}
|I_{23}|\le& C\|\Delta_j \tau\|_{L^2}\|\nabla\Delta_j  \tau\|_{L^\infty}\sum_{k\ge j-2}\|\Delta_k u\|_{L^2}\\
\le& C\|\nabla \tau\|_{B_{\infty,\infty}^{s_2-s_1-\alpha}}2^{j(\alpha+s_1-s_2)}\|\Delta_j \tau\|_{L^2}\sum_{k\ge j-2}\|\Delta_k u\|_{L^2}\\
\le& C2^{-2js_2}\|\nabla \tau\|_{B_{\infty,\infty}^{s_2-s_1-\alpha}}2^{js_2}\|\Delta_j \tau\|_{L^2}\sum_{k\ge j-2}2^{(j-k)(s_1+\alpha)}2^{k(s_1+\alpha)}\|\Delta_k u\|_{L^2}\\
\le& C2^{-2js_2}c_j\|\nabla \tau\|_{B_{\infty,\infty}^{s_2-s_1-\alpha}}\|\tau\|_{H^{s_2}}\|u\|_{H^{s_1+\alpha}}\sum_{k\ge j-2}2^{(j-k)(s_1+\alpha)}c_k\\
\le& C2^{-2js_2}c_j^2\|\nabla \tau\|_{B_{\infty,\infty}^{s_2-s_1-\alpha}}\|\tau\|_{H^{s_2}}\|u\|_{H^{s_1+\alpha}} \ (s_1+\alpha>0),
\end{aligned}
\end{equation*}

\begin{equation*}
\begin{aligned}
|I_{24}|\le& C\|\Delta_j \tau\|_{L^2}\sum_{k\ge j-3}\|\Delta_k u\|_{L^2}\|\tilde{\Delta}_k \nabla \tau\|_{L^\infty}\\
\le& C\|\nabla \tau\|_{B_{\infty,\infty}^{s_2-s_1-\alpha}}\|\Delta_j \tau\|_{L^2}\sum_{k\ge j-3}2^{k(\alpha+s_1-s_2)}\|\Delta_k u \|_{L^2}\\
\le& C2^{-2js_2}\|\nabla \tau\|_{B_{\infty,\infty}^{s_2-s_1-\alpha}}2^{js_2}\|\Delta_j \tau\|_{L^2}\sum_{k\ge j-3}2^{(j-k)s_2}2^{k(\alpha+s_2)}\|\Delta_k u\|_{L^2}\\
\le& C2^{-2js_2} c_j^2\|\nabla \tau\|_{B_{\infty,\infty}^{s_2-s_1-\alpha}}\|\tau\|_{H^{s_2}}\|u\|_{H^{s_1+\alpha}}\ (s_2>0).
\end{aligned}
\end{equation*}
Combining with the above new estimates in (\ref{2.5}) yields the desired inequality (\ref{2.3}). This completes the proof of Lemma \ref{lemma1}.
\end{proof}

\vskip .3in
\section{Proof of Theorem \ref{t1}}
\label{s3}
In this section, we will prove  Theorem \ref{t1} by split the proof into three cases: $\alpha\ge2$, $\frac{3}{2}<\alpha<2$ and $1<\alpha\le\frac{3}{2}.$ Using the regularity criteria in Theorem \ref{A.1} in the  Appendix can easily follow the first case. Then via using standard energy method directly,  some simple manipulation yields  the second case. For the last case, we need exploiting the new structure of equations and tedious manipulation  involving a new type commutator estimate, see (\ref{3.12}). In the following proof, we denote $L^p([0,t],X)$ by $L^p_t(X)$ for some function spaces $X$.
\vskip .1in
Now, we begin the proof. The local well-posedness can be obtained in many ways, such as following the proof in the Chapter 3 of \cite{MB},  we omit the details. So there exists a $T_0>0$, such that (\ref{1.1}) has a unique solution $(u,\tau)$ satisfying
$$(u,\tau)\in C([0,T_0);H^s(\mathbb{R}^2)), \ u\in L^2([0,T_0); H^\alpha(\mathbb{R}^2)).$$
Thanks to the regularity criteria in Theorem \ref{A.1}, it suffices to show, $\forall\ T\ge 0,$
\begin{equation}\label{3.1}
\int_0^T \|\nabla u\|_{B_{\infty,\infty}^0}^2+\|\nabla \tau\|_{B_{\infty,\infty}^{-\alpha}}^2 dt<\infty.
\end{equation}
\vskip .1in
\underline{{\bf Case 1. $\alpha\ge 2$}} \ We obtain firstly the energy estimate of $u$ and $\tau$,
\begin{equation}\label{3.2}
\frac{1}{2}\frac{d}{dt}(\|u(t)\|_{L^2}^2+\|\tau(t)\|_{L^2}^2)+\nu\|\Lambda^\alpha u(t)\|_{L^2}^2=0
\end{equation}
or
\begin{equation}\label{3.3}
\sup_{0\le t\le T}(\|u(t)\|_{L^2}^2+\|\tau(t)\|_{L^2}^2)+\nu\int_0^T\|\Lambda^\alpha u(t)\|_{L^2}^2dt=\|u_0\|_{L^2}^2+\|\tau_0\|_{L^2}^2,
\end{equation}
which implies $u\in L^2_T(H^\alpha).$  By Bernstein's inequality,
$$\int_0^T\|\nabla u\|_{B_{\infty,\infty}^0}^2dt\le \int_0^T\|u\|_{H^\alpha}^2 dt<\infty$$
and
$$\int_0^T\|\nabla \tau\|_{B_{\infty,\infty}^{-\alpha}}^2 dt\le C\int_0^T (\|\tau\|_{L^2}^2+\|\tau\|_{B_{2,\infty}^{2-\alpha}}^2)dt\le C\int_0^T \|\tau\|_{L^2}^2 dt<\infty, $$
which yields (\ref{3.1}).
\vskip .1in
\underline{{\bf Case 2. $\frac{3}{2}<\alpha<2$}}\ \ Although the proof of this case is a little more tedious than the first case, we can finish it by using some elementary inequalities. Let $s_1\in (2-\alpha,\alpha-1)$, with a standard procedure follows
\begin{equation}\label{3.4}
\begin{aligned}
\frac{1}{2}\frac{d}{dt}\|u\|_{\tilde{H}^{s_1}}^2&+\nu\|\Lambda^\alpha u(t)\|_{\tilde{H}^{s_1}}^2=-\sum_{j\ge 0}2^{2js_1}([\Delta_j,u\cdot\nabla]u|\Delta_j u)
\\
&+\sum_{j\ge 0}2^{2js_1}(\Delta_j{\rm div}\tau|\Delta_j u):=J_{11}+J_{12}.
\end{aligned}
\end{equation}
Using (\ref{2.1}) and Young's inequality, we have
$$|J_{11}|\le C\|\nabla u\|_{B_{\infty,\infty}^{-\alpha}}\|u\|_{H^{s_1}}\|u\|_{H^{s_1+\alpha}}\le C(\|u\|_{H^\alpha}^2+1)\|u\|_{H^{s_1}}^2+\frac{\nu}{4}\|\Lambda^\alpha u\|_{H^{s_1}}^2.$$
Using (\ref{3.3}) and interpolation inequality,  for some $\theta\in (0,1),$
\begin{equation*}
\begin{aligned}
|J_{12}|\le& \|\tau\|_{L^2}\|\Lambda^{2s_1+1}u\|_{L^2}\le C\|u\|_{\dot{H}^{s_1}}^\theta\|u\|_{\dot{H}^{s_1+\alpha}}^{1-\theta}\\
\le& C(\|u\|_{\dot{H}^{s_1}}^2+1)+\frac{\nu}{4}\|\Lambda^\alpha u\|_{H^{s_1}}^2.
\end{aligned}
\end{equation*}
Inserting  the bound of $J_{11}$, $J_{12}$ into (\ref{3.4}), then combining with (\ref{3.2}) follows
\begin{equation}\label{3.5}
\frac{d}{dt}(\|u\|_{H^{s_1}}^2+\|\tau\|_{L^2}^2)+\nu\|\Lambda^\alpha u\|_{H^{s_1}}^2\le C(\|u\|_{H^\alpha}^2+1)\|u\|_{H^{s_1}}^2+C,
\end{equation}
which indicates that
\begin{equation}\label{3.6}
u\in L^\infty_T(H^{s_1})\cap L^2_T(H^{s_1+\alpha})
\end{equation}
by applying Gronwall's inequality to (\ref{3.5}). Thus the regularity of $u$ have been improved. The following process devotes to improving  the regularity of $\tau$.
Let $s_2\in (0,s_1+\alpha-1]$, we have
\begin{equation}\label{3.7}
\begin{aligned}
\frac{1}{2}\frac{d}{dt}\|\tau\|_{\tilde{H}^{s_2}}^2=&-\sum_{j\ge0}2^{2js_2}([\Delta_j,u\cdot\nabla]\tau|\Delta_j \tau)+
\sum_{j\ge0}2^{2js_2}(\Delta_j Du|\Delta_j \tau)\\
:=&J_{21}+J_{22}.
\end{aligned}
\end{equation}
Thanks to (\ref{2.3}),
\begin{equation*}
\begin{aligned}
|J_{21}|\le& C\|\nabla u\|_{L^\infty}\|\tau\|_{H^{s_2}}^2+C\|\nabla \tau\|_{B_{\infty,\infty}^{s_2-s_1-\alpha}}\|\tau\|_{H^{s_2}}\|u\|_{H^{s_1+\alpha}}\\
\le& C\|u\|_{H^{s_1+\alpha}}\|\tau\|_{H^{s_2}}^2+\|\tau\|_{H^{s_2}}^2\|u\|_{H^{s_1+\alpha}}.
\end{aligned}
\end{equation*}
For the estimate of $J_{22}$, simple manipulations  derive
$$|J_{22}|\le C\|\tau\|_{H^{s_2}}^2+C\|u\|_{H^{s_1+\alpha}}^2.$$
It follows from substituting the estimates of $J_{21}$ and $J_{22}$ in  (\ref{3.7}) and combining with (\ref{3.2})  that
$$\frac{d}{dt}(\|\tau\|_{H^{s_2}}^2+\|u\|_{L^2}^2)\le C(\|u\|_{H^{s_1+\alpha}}+1)\|\tau\|_{H^{s_2}}^2+C\|u\|_{H^{s_1+\alpha}}^2.$$
Integrating in time $[0,T]$, then using   Gronwall's inequality and (\ref{3.6}) yields
\begin{equation}\label{3.8}
\tau\in L^\infty_T(H^{s_2}),\ \forall\ s_2\in(0,s_1+\alpha-1].
\end{equation}
 Therefore, the improved regularity of $u$ (\ref{3.6}) and $\tau$ (\ref{3.8}) follows (\ref{3.1}). In fact, one has
 $$\int_0^T \|\nabla u\|_{B_{\infty,\infty}^0}^2 dt\le C\|u\|_{L^2_T(H^{s_1+\alpha})}^2<\infty$$
and
$$\int_0^T \|\nabla \tau\|_{B_{\infty,\infty}^{-\alpha}}^2 dt\le C \int_{0}^{T}\|\tau\|_{H^{2-\alpha}}^2dt\le C\int_{0}^{T}\|\tau\|_{H^{s_1+\alpha-1}}^2 dt<\infty.$$

\underline{{\bf Case 3. $1<\alpha\le \frac{3}{2}$}} We can get the vorticity equation by applying the operator curl to the first equation of (\ref{1.1}),
\begin{equation}\label{3.9}
\partial_t \omega+u\cdot\nabla \omega+\nu\Lambda^{2\alpha} \omega={\rm curldiv}\ \tau.
\end{equation}
Using the definition of $\mathcal{R}_\alpha$ in the section \ref{s1}, and denote $\Gamma:=\frac{1}{\nu}(\omega-\mathcal{R}_\alpha \tau)$. Then (\ref{3.9}) can be rewritten as
\begin{equation}\label{3.10}
\partial_t \omega+u\cdot\nabla \omega+\nu\Lambda^{2\alpha}\Gamma=0.
\end{equation}
Applying the operator $-\mathcal{R}_\alpha$ to the second equation of (\ref{1.1}), and adding the resulting equation to (\ref{3.10}), we have
$$\partial_t \Gamma+u\cdot\nabla \Gamma+\nu\Lambda^{2\alpha} \Gamma=[\mathcal{R}_\alpha,u\cdot\nabla]\tau+\frac{1}{2\nu}\Lambda^{2-2\alpha}\omega.$$
Taking the $L^2$ inner product with $\Gamma$ yields
\begin{equation}\label{3.11}
\begin{aligned}
\frac{1}{2}\frac{d}{dt}\|\Gamma\|_{L^2}^2+\nu\|\Lambda^\alpha \Gamma\|_{L^2}^2
\le& \frac{1}{2\nu}|(\Lambda^{2-2\alpha} \omega| \Gamma)|
+|([\mathcal{R}_\alpha,u\cdot\nabla]\tau|\Gamma)|\\
:=& K_1+K_2.
\end{aligned}
\end{equation}
By H\"{o}lder's inequality, interpolation inequality and Young's inequality,
\begin{equation*}
\begin{aligned}
|K_1|\le C\|\Lambda^{3-2\alpha}u\|_{L^2}\|\Gamma\|_{L^2}\le C\|u\|_{H^\alpha}\|\Gamma\|_{L^2}\le C\|u\|_{H^\alpha}^2+C\|\Gamma\|_{L^2}^2.
\end{aligned}
\end{equation*}
Using the following estimate,
\begin{equation}\label{3.12}
\|[\mathcal{R}_\alpha,u\cdot\nabla]\tau\|_{\dot{H}^{2\alpha-3}}^2\le C\|u\|_{H^\alpha}^2\|\tau\|_{L^2}^2,
\end{equation}
whose proof can be seen in the Appendix,  $K_2$ can be bounded as follows:
$$|K_2|\le C\|u\|_{H^\alpha}^2\|\tau\|_{L^2}^2+C\|\Lambda^{3-2\alpha} \Gamma\|_{L^2}^2\le C\|u\|_{H^\alpha}^2\|\tau\|_{L^2}^2+C\|\Gamma\|_{L^2}^2+\frac{\nu}{2}\|\Lambda^\alpha \Gamma\|_{L^2}^2.$$
Combining the bound of $K_1$ and $K_2$ in (\ref{3.11}), we have
$$\frac{1}{2}\frac{d}{dt}\|\Gamma\|_{L^2}^2+\nu\|\Lambda^\alpha \Gamma\|_{L^2}^2\le C\|u\|_{H^\alpha}^2\|\tau\|_{L^2}^2+C\| \Gamma\|_{L^2}^2+\frac{\nu}{2}\|\Lambda^\alpha \Gamma\|_{L^2}^2.$$
Absorbing the third term on the right hand side by the left hand side in the above inequality, then integrating the resulting inequality in $[0,T]$, we get
$$\sup_{0\le t\le T}\|\Gamma(t)\|_{L^2}^2+\nu\int_0^T \|\Lambda^\alpha \Gamma(t)\|_{L^2}^2 dt\le C(T,\nu, \|(u_0,\tau_0)\|_{L^2}, \|\Gamma_0\|_{L^2}),$$
that is
\begin{equation}\label{3.13}
\begin{aligned}
\sup_{0\le t\le T}\|(\omega-\mathcal{R}_\alpha \tau)(t)\|_{L^2}^2&+\nu\int_0^T \|\Lambda^\alpha(\omega-\mathcal{R}_\alpha \tau)(t)\|_{L^2}^2 dt\\
\le&
C(T,\nu, \alpha, \|(u_0,\tau_0)\|_{L^2}, \|\omega_0-\mathcal{R}_\alpha \tau_0\|_{L^2}).
\end{aligned}
\end{equation}
From the second equation in (\ref{1.1}), we get  the $L^q$ ($2\le q<\infty$) estimate of $\tau$:
\begin{equation}\label{3.131}
\frac{d}{dt}\|\tau\|_{L^q}\le C\|\omega\|_{L^q}.
\end{equation}
When $\alpha=\frac{3}{2},$  choosing $q=4$ in (\ref{3.131}) and using Sobolev's inequality $\|\omega\|_{L^4}\le C\|\Lambda^\frac{3}{2} u\|_{L^2}$,  we  get
\begin{equation}\label{3.14}
\tau\in L^\infty_T(L^4).
\end{equation}
When $1<\alpha<\frac{3}{2}$, let $a=\min\{\frac{3-2\alpha}{2\alpha-2},\frac{1}{2}\}$, choosing $\frac{1}{q}=a(\alpha-1)$ in (\ref{3.131}), with
$$\|\mathcal{R}_\alpha \tau\|_{L^q}\le C\|\tau\|_{L^\frac{1}{(a+1)(\alpha-1)}}$$
by Hardy-Littlewood-Sobolev Theorem ( see Chapter 5 of \cite{Stein}). We can also observe that $2\le \frac{1}{(a+1)(\alpha-1)}\le q,$ so that
\begin{equation*}
\begin{aligned}
\frac{d}{dt}\|\tau\|_{L^q}\le& C\|\omega-\mathcal{R}_\alpha\tau\|_{L^q}+C\|\mathcal{R}_\alpha \tau\|_{L^q}\\
\le& C\|\omega-\mathcal{R}_\alpha\tau\|_{L^q}+C\|\tau\|_{L^\frac{1}{(a+1)(\alpha-1)}}\\
\le& C\|\omega-\mathcal{R}_\alpha\tau\|_{L^q}+C\|\tau\|_{L^2}+C\|\tau\|_{L^q}.
\end{aligned}
\end{equation*}
Integrating in time $[0,T]$, thanks to (\ref{3.13}) which ensures $\omega-\mathcal{R}_\alpha \tau \in L^1_T(L^q)$ and using Gronwall's inequality, one gets
\begin{equation}\label{3.15}
\tau \in L^\infty_T(L^\frac{1}{a(\alpha-1)}),\   {\rm where}\  a=\min\{\frac{3-2\alpha}{2\alpha-2},\frac{1}{2}\}.
\end{equation}
Combining with (\ref{3.14}) and (\ref{3.15}) follows that
$$\int_0^T \|\nabla \tau\|_{B_{\infty,\infty}^{-\alpha}}^2 d\tau\le \int_0^T(\|\tau\|_{L^2}^2+\|\tau\|_{L^\frac{2}{\alpha-1}}^2)dt<\infty.$$
Using (\ref{3.13}), (\ref{3.14}) and (\ref{3.15}), and by interpolation inequality, we have
\begin{equation*}
\begin{aligned}
\int_0^T\|\omega\|_{\dot{B}_{\frac{1}{\alpha-1},\infty}^{2\alpha-2}}^2dt\le& \int_0^T\|\Lambda^{2\alpha-2}\omega\|_{L^\frac{1}{\alpha-1}}^2dt
\le C\int_0^T\|\Lambda^{2\alpha-2}(\omega-\mathcal{R}_\alpha \tau)\|_{L^\frac{1}{\alpha-1}}^2dt\\
&+
\int_0^T\|\Lambda^{2\alpha-2}\mathcal{R}_\alpha \tau\|_{L^\frac{1}{\alpha-1}}^2dt
<\infty,
\end{aligned}
\end{equation*}
which implies that
$$\int_0^T\|\omega\|_{B_{\infty,\infty}^0}^2 dt<\infty$$
by Bernstein's inequality. Hence,  we have proved (\ref{3.1}) and then concludes the proof of Theorem \ref{t1}.

\vskip .3in
\section{Proof of Theorem \ref{t2}}
\label{s4}
As the previous section,  we only give the global a priori estimates.
Thanks to the regularity criteria in Theorem \ref{A.1}, it suffices to show that $\forall\ T\ge 0,$
\begin{equation}\label{4.0}
\int_0^T \|\nabla u\|_{B_{\infty,\infty}^{-\beta}}^2+\|\nabla \tau\|_{B_{\infty,\infty}^{-1}}^2 dt<\infty.
\end{equation}
 We  need firstly a lemma.
\begin{lemma}\label{weisis}\cite{CRW}
Let $\mathcal{R}=(\mathcal{R}_1,\mathcal{R}_2,\cdot\cdot\cdot,\mathcal{R}_d)$ be the Riesz transform on $\R^d$. Then, the following commutator estimate holds
\begin{equation}\label{Weies}
\|[b,\mathcal{R}_i\mathcal{R}_j]f\|_{L^p}\le C(d,p)[b]_{BMO}\|f\|_{L^p},\ p\in (1,\infty),
\end{equation}
where the semi-norm $[b]_{BMO}$ is defined by
$$[b]_{BMO}:=\sup_{B}\frac{1}{|B|}\int_{B}|b-b_{B}|\  dx,\ b_{B}=\frac{1}{|B|}\int_{B} b(x) dx$$
and the supremum is taken over all balls in $\R^d$.
\end{lemma}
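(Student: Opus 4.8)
\emph{Proof proposal for Lemma \ref{weisis}.}
The plan is to recognize $T:=\mathcal{R}_i\mathcal{R}_j$ as a convolution Calder\'on--Zygmund operator. Its kernel is homogeneous of degree $-d$, smooth away from the origin, and (modulo a harmless constant multiple of the identity, which drops out of every commutator) has the standard size and gradient bounds $|K(x)|\le C|x|^{-d}$ and $|\nabla K(x)|\le C|x|^{-d-1}$; in particular $T$ is bounded on $L^p(\R^d)$ for all $p\in(1,\infty)$. The estimate \eqref{Weies} then proceeds through the Fefferman--Stein sharp maximal function inequality. Writing $Mf$ for the Hardy--Littlewood maximal function and
$$M^\#g(x)=\sup_{B\ni x}\frac{1}{|B|}\int_B|g-g_B|\,dy$$
for the sharp maximal function, it suffices to establish the pointwise bound
$$M^\#([b,T]f)(x)\le C[b]_{BMO}\Big(\big(M(|Tf|^r)(x)\big)^{1/r}+\big(M(|f|^r)(x)\big)^{1/r}\Big)$$
for some fixed $r\in(1,\infty)$, and then to conclude by choosing $r<p$.

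First I would fix a ball $B=B(x_0,\rho)$ containing $x$ and use the identity $[b,T]f=(b-b_B)\,Tf-T\big((b-b_B)f\big)$, splitting $f=f_1+f_2$ with $f_1=f\mathbf{1}_{2B}$ and $f_2=f\mathbf{1}_{(2B)^c}$. Subtracting the constant $c:=T\big((b-b_B)f_2\big)(x_0)$, the oscillation $\frac{1}{|B|}\int_B|[b,T]f-c|$ is controlled by three contributions: the multiplier term $(b-b_B)\,Tf$, the local term $T\big((b-b_B)f_1\big)$, and the far term $T\big((b-b_B)f_2\big)(\cdot)-c$.

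For the first term, H\"older's inequality with exponents $r'$ and $r$ together with the John--Nirenberg inequality, which gives $\big(\frac{1}{|B|}\int_B|b-b_B|^{r'}\big)^{1/r'}\le C[b]_{BMO}$, yields a bound by $C[b]_{BMO}\big(M(|Tf|^r)(x)\big)^{1/r}$. For the local term I would use the $L^r$-boundedness of $T$ to pass to $\big(\frac{1}{|B|}\int_{2B}|b-b_B|^r|f|^r\big)^{1/r}$, and then a second application of John--Nirenberg through generalized H\"older to separate the factors, bounding it by $C[b]_{BMO}\big(M(|f|^{r_0})(x)\big)^{1/r_0}$ for a slightly smaller $r_0$. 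The far term is where the kernel smoothness enters: writing the difference as $\int_{(2B)^c}\big(K(x-y)-K(x_0-y)\big)(b(y)-b_B)f(y)\,dy$, the gradient estimate $|K(x-y)-K(x_0-y)|\le C\rho|x_0-y|^{-d-1}$ reduces the integral to a sum over dyadic annuli $2^{k+1}B\setminus 2^kB$; combined with $|b_{2^kB}-b_B|\le Ck[b]_{BMO}$ this produces $C[b]_{BMO}\sum_{k\ge1}k2^{-k}$ times averages of $|f|$, hence a bound by $C[b]_{BMO}\,Mf(x)$.

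The main obstacle is precisely this far-field estimate: it requires both the cancellation encoded in the gradient/H\"ormander condition on $K$ and careful bookkeeping of the logarithmically growing averages $|b_{2^kB}-b_B|$, which is exactly what a bare $L^p$ multiplier bound cannot supply and what forces the commutator structure into play. Once the pointwise inequality is established, I would finish by invoking Fefferman--Stein, $\|[b,T]f\|_{L^p}\le C\|M^\#([b,T]f)\|_{L^p}$, justified a priori on a dense class so that the left-hand side is finite and then removed by approximation, followed by the $L^p$-boundedness of $T=\mathcal{R}_i\mathcal{R}_j$ and the $L^{p/r}$-boundedness of $M$ for a choice $1<r<p$. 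This gives $\|[b,\mathcal{R}_i\mathcal{R}_j]f\|_{L^p}\le C(d,p)[b]_{BMO}\|f\|_{L^p}$, as claimed.
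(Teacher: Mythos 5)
Your proposal is correct, but note that the paper itself offers no proof of this lemma: it is quoted verbatim from Coifman--Rochberg--Weiss \cite{CRW}, so the only meaningful comparison is with the original argument there. Your route --- viewing $T=\mathcal{R}_i\mathcal{R}_j$ as a constant multiple of the identity (which drops out of the commutator) plus a Calder\'on--Zygmund convolution operator, proving the pointwise sharp-function bound $M^\#([b,T]f)(x)\le C[b]_{BMO}\bigl(\bigl(M(|Tf|^r)(x)\bigr)^{1/r}+\bigl(M(|f|^r)(x)\bigr)^{1/r}\bigr)$, and concluding via the Fefferman--Stein inequality --- is the now-standard real-variable proof (Str\"omberg's argument). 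The original proof in \cite{CRW} is genuinely different: it conjugates $T$ by exponentials, i.e.\ considers the analytic family $T_z f=e^{zb}\,T(e^{-zb}f)$, uses John--Nirenberg to show that $e^{\mathrm{Re}(z)b}$ is an $A_p$ weight uniformly for $|z|$ small when $b\in BMO$, invokes the weighted $L^p(w)$ boundedness of singular integrals to get a uniform bound on $T_z$, and then extracts the commutator as the first-order coefficient, $[b,T]f=\tfrac{d}{dz}T_zf\big|_{z=0}=\tfrac{1}{2\pi i}\oint_{|z|=\varepsilon}z^{-2}\,T_zf\,dz$, by Cauchy's formula. That argument is shorter if one takes the $A_p$ theory as given and makes the BMO hypothesis enter in a single clean place; your sharp-maximal argument is self-contained real analysis, applies verbatim to any Calder\'on--Zygmund operator (nothing specific to Riesz transforms is used beyond standard kernel bounds), and the pointwise inequality it establishes is stronger, being the usual gateway to weighted and endpoint variants. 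Two small points of care in your sketch: in the far-field term the factor $|b-b_B|$ must still be separated from $|f|$ by H\"older and John--Nirenberg on each annulus, so the resulting bound is $C[b]_{BMO}\bigl(M(|f|^r)(x)\bigr)^{1/r}$ rather than $C[b]_{BMO}Mf(x)$ --- harmless, since your scheme already carries the $r$-th power maximal function; and the a priori finiteness needed to apply Fefferman--Stein, which you correctly flag as requiring a truncation/density argument (e.g.\ bounded $b$ first, then passing to the limit).
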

Now, we begin the proof with the energy estimate like (\ref{3.3}):
\begin{equation}\label{4.1}
\begin{aligned}
\sup_{0\le t\le T}(\|u(t)\|_{L^2}^2+\|\tau(t)\|_{L^2}^2)+&\nu\int_0^T\|\Lambda^\alpha u(t)\|_{L^2}^2 dt+\eta\int_0^T\|\Lambda^\beta \tau(t)\|_{L^2}^2dt\\
&=\|u_0\|_{L^2}^2+\|\tau_0\|_{L^2}^2.
\end{aligned}
\end{equation}
As the proof of the previous theorem for the case $1<\alpha\le\frac{3}{2}$,  we will exploit the structure of the equation. Similarly, we have the vorticity equations
\begin{equation}\label{4.2}
\partial_t \omega+u\cdot\nabla \omega-\nu\Delta\Gamma={\rm curldiv}\tau.
\end{equation}
Denote $\Gamma_1:=\omega-\mathcal{R}_1\tau$, here $\mathcal{R}_1$ is defined in the section \ref{s1}. Applying $-\mathcal{R}_1$ to the second equation of (\ref{1.1}), then adding the resulting equation to (\ref{4.2}) yields that
$$\partial_t \Gamma_1+u\cdot\nabla \Gamma_1-\nu\Delta \Gamma_1=[\mathcal{R}_1,u\cdot\nabla]\tau+\frac{1}{2\nu}\omega-\eta\Lambda^{2\beta}\mathcal{R}_1\tau.$$
Taking the $L^2$ inner product with $\Gamma_1$, then
\begin{equation}\label{4.3}
\begin{aligned}
\frac{1}{2}\frac{d}{dt}\|\Gamma_1\|_{L^2}^2+\nu\|\nabla \Gamma_1\|_{L^2}^2\le& |([\mathcal{R}_1,u\cdot\nabla]\tau|\Gamma_1)|
+\frac{1}{2\nu}|(\omega|\Gamma_1)|
+\eta|(\mathcal{R}_1\tau|\Lambda^{2\beta}\Gamma_1)|\\
:=&N_1+N_2+N_3.
\end{aligned}
\end{equation}
Thanks to the commutator estimate (\ref{Weies}), using ${\rm div}u=0$ and   integrating by parts , we have
\begin{equation*}
\begin{aligned}
|N_1|\le& \sum_{i=1}^3|([\mathcal{R}_1,u_i]\tau|\partial_i \Gamma_1)|\le \sum_{i=1}^3\|[\mathcal{R}_1,u_i]\tau\|_{L^2}\|\nabla \Gamma_1\|_{L^2}\\
\le& C[u]_{BMO}\|\tau\|_{L^2}\|\nabla \Gamma_1\|_{L^2}\le C\|\nabla u\|_{L^2}^2+\frac{\nu}{4}\|\nabla \Gamma_1\|_{L^2}^2.
\end{aligned}
\end{equation*}
The estimate of $N_2$ and $N_3$ can be easily obtained as follows:
$$|N_2|\le C\|\omega\|_{L^2}\|\Gamma_1\|_{L^2}\le C\|\omega\|_{L^2}^2+C\|\Gamma_1\|_{L^2}^2,$$
$$|N_3|\le C\|\Gamma_1\|_{L^2}^2+\frac{\nu}{4}\|\nabla \Gamma_1\|_{L^2}^2.$$
Inserting the above estimates into (\ref{4.3}) follows that
$$\frac{1}{2}\frac{d}{dt}\|\Gamma_1\|_{L^2}^2+\frac{\nu}{2}\|\nabla \Gamma_1\|_{L^2}^2\le C\|\nabla u\|_{L^2}^2+C\|\Gamma_1 \|_{L^2}^2,$$
which yields
$$\sup_{0\le t\le T}\|\Gamma_1(t)\|_{L^2}^2+
\frac{\nu}{2}\int_{0}^T\|\nabla \Gamma_1(t)\|_{L^2}^2dt\le C(T, \nu,\beta, \|(u_0,\tau_0)\|_{L^2},\|\Gamma_1(0)\|_{L^2})$$
by integrating in time, using energy estimate (\ref{4.1}) and applying the Gronwall's inequality. That is
\begin{equation}\label{4.4}
\begin{aligned}
\sup_{0\le t\le T}\|(\omega-\mathcal{R}_1\tau)(t)\|_{L^2}^2+&\int_0^T \|\nabla (\omega-\mathcal{R}_1\tau)(t)\|_{L^2}^2dt\\
\le&  C(T, \nu,\beta, \|(u_0,\tau_0)\|_{L^2},\|(\omega-\mathcal{R}_1\tau)(0)\|_{L^2}).
\end{aligned}
\end{equation}
We then deduce, by a  similar argument  as the previous theorem for the case $1<\alpha\le \frac{3}{2}$,
\begin{equation*}
\begin{aligned}
\frac{d}{dt}\|\tau\|_{L^p}\le& C\|\omega\|_{L^p}\le C\|\omega-\mathcal{R}_1\tau\|_{L^p}+C\|\mathcal{R}_1\tau\|_{L^p}\\
\le& C\|\omega-\mathcal{R}_1\tau\|_{L^p}+C\|\tau\|_{L^p},
\end{aligned}
\end{equation*}
which leads
$$\tau\in L^\infty_T(L^p),\ \forall\ 2\le p<\infty$$
by integrating in time, using (\ref{4.4}), interpolation inequality and Gronwall's inequality.
Using  (\ref{4.4}) and interpolation inequality again yields
\begin{equation}\label{4.5}
\begin{aligned}
\int_0^T \|\omega\|_{L^p}^2 dt\le C\int_0^T (\|\omega-\mathcal{R}_1\tau\|_{L^p}^2+\|\tau\|_{L^p}^2)dt<\infty.
\end{aligned}
\end{equation}
This implies
$$\int_0^T \|\nabla u\|_{B_{\infty,\infty}^{-\beta}}^2dt<\infty.$$
In fact, choosing $p=\frac{2}{\beta}$ in (\ref{4.5}), it follows from using the Bernstein's inequality that
\begin{equation*}
\begin{aligned}
\int_0^T \|\nabla u\|_{B_{\infty,\infty}^{-\beta}}^2dt\le& C\int_0^T \|\nabla u\|_{{B}_{\frac{2}{\beta},\infty}^0}^2 dt\\
\le& C\int_0^T \|u\|_{L^2}^2+\|\omega\|_{L^\frac{2}{\beta}}^2 dt<\infty.
\end{aligned}
\end{equation*}
To prove (\ref{4.0}), with energy estimate (\ref{4.1}), by Bernstein's inequality, it suffices to show
\begin{equation}\label{aim}
\int_{0}^T \|\tau\|_{\dot{B}_{q,\infty}^\frac{2}{q}}^2 dt\le C,\ {\rm for\ some}\ q>\frac{2}{\beta}.
\end{equation}
Following the standard argument,
\begin{equation}\label{4.6}
\frac{d}{dt}\|\dot{\Delta}_j\tau\|_{L^q}+c2^{2j\beta}\|\dot{\Delta}_j \tau\|_{L^q}\le \|[\dot{\Delta}_j,u\cdot\nabla]\tau\|_{L^q}
+C\|\Delta_j \omega\|_{L^q},
\end{equation}
where we have used the generalized Bernstein's inequality (see \cite{CMZ1}),
$$\int_{\R^2} \Lambda^{2\beta}\dot{\Delta}_j \tau \dot{\Delta}_j \tau |\dot{\Delta}_j \tau|^{q-2} dx\ge c2^{2j\beta}\|\dot{\Delta}_j \tau\|_{L^q}^q,\ \forall\ 2\le q<\infty.$$
Multiplying (\ref{4.6}) by $2^{j(\frac{2}{q}-\beta)}$ and taking the supremum over $j\in\Z$ on the both sides of the resulting inequality yields
\begin{equation}\label{40}
\frac{d}{dt}\|\tau\|_{\dot{B}_{q,\infty}^{\frac{2}{q}-\beta}}+c\|\tau\|_{\dot{B}_{q,\infty}^{\frac{2}{q}+\beta}}\le C\|\nabla u\|_{\dot{B}_{q,\infty}^{\frac{2}{q}-\beta}}+
\underbrace{\sup_{j\in \Z}2^{j(\frac{2}{q}-\beta)}\|[\dot{\Delta}_j,u\cdot\nabla]\tau\|_{L^q}}_{\Xi}.
\end{equation}
Next, we will bound $\Xi$. By the homogeneous Bony's decomposition,
\begin{equation*}
\begin{aligned}
|\Xi|\le &\sup_{j\in \Z}2^{j(\frac{2}{q}-\beta)}\{
\sum_{|k-j|\le4}\|[\dot{\Delta}_j, \dot{S}_{k-1}u\cdot\nabla]\dot{\Delta}_k\tau\|_{L^q}
+\sum_{|k-j|\le4}\|\dot{\Delta}_j(\dot{\Delta}_k u\cdot\nabla \dot{S}_{k-1}\tau)\|_{L^q}\\
&
+\sum_{k\ge j-2}\|\dot{\Delta}_k u\cdot\nabla \dot{\Delta}_j\dot{S}_{k+2}\tau\|_{L^q}
+\sum_{k\ge j-3}\|\dot{\Delta}_j(\dot{\Delta}_ku\cdot\nabla\tilde{\dot{\Delta}}_k\tau)\|_{L^q}
\}\\
=:&\Xi_1+\Xi_2+\Xi_3+\Xi_4.
\end{aligned}
\end{equation*}
By H\"{o}lder's inequality,  (\ref{SCE}) and (\ref{E}),
\begin{equation*}
\begin{aligned}
|\Xi_1|\le& \sup_{j\in \Z}2^{j(\frac{2}{q}-\beta)}\sum_{|k-j|\le4}\|\nabla \dot{S}_{k-1}u\|_{L^q}\|\dot{\Delta}_k\tau\|_{L^\infty}\\
\le& C\|\nabla u\|_{\dot{B}_{q,\infty}^{\frac{2}{q}-\beta}}\sup_{j\in \Z}\sum_{|k-j|\le4}2^{(j-k)(\frac{2}{q}-\beta)}\|\dot{\Delta}_k \tau\|_{L^\infty}\\
\le& C\|\nabla u\|_{\dot{B}_{q,\infty}^{\frac{2}{q}-\beta}}\|\tau\|_{\dot{B}_{\infty,\infty}^0}.
\end{aligned}
\end{equation*}
By H\"{o}lder's inequality and (\ref{E}),
\begin{equation*}
\begin{aligned}
|\Xi_2|\le& \sup_{j\in \Z}2^{j(\frac{2}{q}-\beta)}\sum_{|k-j|\le4}\|\dot{\Delta}_k u\|_{L^q}\|\nabla\dot{S}_{k-1}\tau\|_{L^\infty}\\
\le& C\|\tau\|_{\dot{B}_{\infty,\infty}^0}\sup_{j\in \Z}2^{j(\frac{2}{q}-\beta)}\sum_{|k-j|\le4}2^k\|\dot{\Delta}_k u\|_{L^q}\\
\le& C\|\tau\|_{\dot{B}_{\infty,\infty}^0}\sup_{j\in \Z}\sum_{|k-j|\le4}2^{(j-k)(\frac{2}{q}-\beta)}2^{k(\frac{2}{q}-\beta)}\|\nabla \dot{\Delta}_k u\|_{L^q}\\
\le& C\|\nabla u\|_{\dot{B}_{q,\infty}^{\frac{2}{q}-\beta}}\|\tau\|_{\dot{B}_{\infty,\infty}^0}.
\end{aligned}
\end{equation*}
By H\"{o}lder's inequality,  Bernstein's inequality and Young's inequality for series,
\begin{equation*}
\begin{aligned}
|\Xi_3|\le& C\sup_{j\in \Z}2^{j(\frac{2}{q}-\beta+1)}\|\dot{\Delta}_j \tau\|_{L^\infty}\sum_{k\ge j-2}\|\dot{\Delta}_k u\|_{L^q}\\
\le& C\|\tau\|_{\dot{B}_{\infty,\infty}^0}\sup_{j\in \Z}\sum_{k\ge j-2}2^{(j-k)(\frac{2}{q}-\beta+1)}2^{k(\frac{2}{q}-\beta+1)}\|\dot{\Delta}_k u\|_{L^q}\\
\le& C\|\tau\|_{\dot{B}_{\infty,\infty}^0} \|2^{k(\frac{2}{q}-\beta+1)}{\bf 1}_{k\le 2}\|_{l^1(\Z)}\left\|2^{k(\frac{2}{q}-\beta+1)}\|\dot{\Delta}_k u\|_{L^q}\right\|_{l^\infty(\Z)}\\
\le& C\|\nabla u\|_{\dot{B}_{q,\infty}^{\frac{2}{q}-\beta}}\|\tau\|_{\dot{B}_{\infty,\infty}^0},
\end{aligned}
\end{equation*}

\begin{equation*}
\begin{aligned}
|\Xi_4|\le& C\sup_{j\in \Z}2^{j(\frac{2}{q}-\beta+1)}\sum_{k\ge j-3}\|\dot{\Delta}_k u\|_{L^q}\|\tilde{\dot{\Delta}}_k \tau\|_{L^\infty}\\
\le& C\|\tau\|_{\dot{B}_{\infty,\infty}^0}\sup_{j\in \Z}\sum_{k\ge j-3}2^{(j-k)(\frac{2}{q}-\beta+1)}2^{k(1+\frac{2}{q}-\beta)}\|\dot{\Delta}_k u\|_{L^q}\\
\le& C\|\tau\|_{\dot{B}_{\infty,\infty}^0}\|2^{k(1+\frac{2}{q}-\beta)}{\bf 1}_{k\le 3}\|_{l^1(\Z)}\left\|2^{k(1+\frac{2}{q}-\beta)}\|\dot{\Delta}_k u\|_{L^q}\right\|_{l^\infty(\Z)}\\
\le& C\|\nabla u\|_{\dot{B}_{q,\infty}^{\frac{2}{q}-\beta}}\|\tau\|_{\dot{B}_{\infty,\infty}^0}.
\end{aligned}
\end{equation*}
Plugging the above estimates in (\ref{40}), using Bernstein's inequality and  interpolation inequality
$$\|\tau\|_{\dot{B}_{\infty,\infty}^0}\le C\|\tau\|_{\dot{B}_{p,\infty}^{\frac{2}{p}-\beta}}^\frac{1}{2}\|\tau\|_{\dot{B}_{p,\infty}^{\frac{2}{p}+\beta}}^\frac{1}{2}$$
and Young's inequality yields
\begin{equation}\label{4.7}
\frac{d}{dt}\|\tau\|_{\dot{B}_{q,\infty}^{\frac{2}{q}-\beta}}+\frac{c}{2}\|\tau\|_{\dot{B}_{q,\infty}^{\frac{2}{q}+\beta}}\le
C\|\omega\|_{\dot{B}_{q,\infty}^{\frac{2}{q}-\beta}}+C\|\omega\|_{\dot{B}_{q,\infty}^{\frac{2}{q}-\beta}}^2\|\tau\|_{\dot{B}_{q,\infty}^{\frac{2}{q}-\beta}},
\end{equation}
where we have used the bound of the Riesz transforms in homogeneous Besov space.  Thanks to $\tau\in L^\infty_T(L^p)$, $\forall\ p\in[2,\infty)$, and (\ref{4.4}), by Bernstein's inequality and $q>\frac{2}{\beta}$,
\begin{equation}\label{4.8}
\begin{aligned}
\int_0^T\|\omega\|_{\dot{B}_{q,\infty}^{\frac{2}{q}-\beta}}^2 dt\le& C\int_0^T \|\omega-\mathcal{R}_1\tau\|_{\dot{B}_{q,\infty}^{\frac{2}{q}-\beta}}^2 dt
+C\int_0^T \|\tau\|_{\dot{B}_{q,\infty}^{\frac{2}{q}-\beta}}^2dt\\
\le &C\int_0^T \|\omega-\mathcal{R}_1\tau\|_{\dot{H}^{1-\beta}}^2 dt+C\int_0^T \|\tau\|_{\dot{B}_{\frac{2}{\beta},\infty}^0}^2dt\\
\le& C\int_0^T \|\omega-\mathcal{R}_1\tau\|_{\dot{H}^{1-\beta}}^2 dt+C\int_0^T \|\tau\|_{L^\frac{2}{\beta}}^2dt\\
<&\infty.
\end{aligned}
\end{equation}
After integrating (\ref{4.7}) in $[0,T]$,  using (\ref{4.8}) and Gronwall's inequality derives
$$\sup_{0\le t\le T}\|\tau(t)\|_{\dot{B}_{q,\infty}^{\frac{2}{q}-\beta}}+c\int_0^T\|\tau\|_{\dot{B}_{q,\infty}^{\frac{2}{q}+\beta}}dt<\infty,$$
which implies (\ref{aim}) by interpolation inequality,
$$\|\tau\|_{L^2_T(\dot{B}_{q,\infty}^\frac{2}{q})}\le C\|\tau\|_{L^\infty_T(\dot{B}_{q,\infty}^{\frac{2}{q}-\beta})}^\frac{1}{2}\|\tau\|_{L^1_T(\dot{B}_{q,\infty}^{\frac{2}{q}+\beta})}^\frac{1}{2}.$$
This completes the proof of Theorem \ref{t2}.


\vskip .4in

\appendix
\section{}
\label{s6}
In this section, we will prove the regularity criteria for (\ref{1.1}) in general cases based on Littlewood-Palay Theory, and then give the proof of (\ref{3.12}) which plays the important role in the proof of Theorem \ref{t1}.
\begin{thm}\label{A.1}
Consider (\ref{1.1}) with $\alpha>0$ and the initial data $(u_0,\tau_0)\in H^s(\R^2)$, $s>2$. If $(u,\tau)$ satisfies the following condition:
$$\int_0^T \|\nabla u\|_{B_{\infty,\infty}^0}^2+\|\nabla \tau\|_{B_{\infty,\infty}^{-\alpha}}^2 dt<\infty,\ \ {\rm if}\ \ \eta=0,$$
or
$$\int_0^T \|\nabla u\|_{B_{\infty,\infty}^{-\min\{\alpha,\beta\}}}^2+\|\nabla \tau\|_{B_{\infty,\infty}^{-\alpha}}^2 dt<\infty,\ \ {\rm if}\ \eta>0,\beta>0,$$
then $(u,\tau)$ remains regular in $[0,T]$.
\end{thm}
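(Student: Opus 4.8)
The plan is to establish an a priori bound for $\|(u,\tau)(t)\|_{H^s}$ on $[0,T]$ under the stated integral hypothesis; combined with the local theory and the standard continuation principle (a local $H^s$ solution cannot cease to exist while its $H^s$ norm stays finite, using the $\nu$- and $\eta$-dissipations), this yields regularity on all of $[0,T]$. Since the low frequencies $\|(u,\tau)\|_{L^2}$ are controlled by the basic $L^2$ energy identity (as in \eqref{3.3} and \eqref{4.1}), it suffices to bound the high-frequency part $\|\cdot\|_{\tilde{H}^s}$. First I would apply $\Delta_j$ ($j\ge0$) to both equations of \eqref{1.1}, pair with $\Delta_j u$ and $\Delta_j\tau$ in $L^2$, and sum against $2^{2js}$. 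Using ${\rm div}\,u=0$ to discard advection of the top block, this produces
\begin{equation*}
\begin{aligned}
\frac12\frac{d}{dt}\big(\|u\|_{\tilde{H}^s}^2+\|\tau\|_{\tilde{H}^s}^2\big)+\nu\|\Lambda^\alpha u\|_{\tilde{H}^s}^2+\eta\|\Lambda^\beta\tau\|_{\tilde{H}^s}^2
&=-\sum_{j\ge0}2^{2js}([\Delta_j,u\cdot\nabla]u\,|\,\Delta_j u)\\
&\quad-\sum_{j\ge0}2^{2js}([\Delta_j,u\cdot\nabla]\tau\,|\,\Delta_j\tau)+\mathcal{C},
\end{aligned}
\end{equation*}
where $\mathcal{C}=\sum_{j\ge0}2^{2js}\big[(\Delta_j{\rm div}\,\tau\,|\,\Delta_j u)+(\Delta_j Du\,|\,\Delta_j\tau)\big]$ collects the two coupling terms.

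The crucial observation is that $\mathcal{C}$ vanishes identically. Indeed $\Delta_j$ commutes with ${\rm div}$ and with $D$, $\Delta_j u$ is again divergence free, and $\Delta_j\tau$ is again symmetric, so integrating by parts and using symmetry,
\begin{equation*}
(\Delta_j{\rm div}\,\tau\,|\,\Delta_j u)=-(\Delta_j\tau\,|\,\nabla\Delta_j u)=-(\Delta_j\tau\,|\,D\Delta_j u)=-(\Delta_j Du\,|\,\Delta_j\tau).
\end{equation*}
This removes the only genuine derivative loss in the system, since both ${\rm div}\,\tau$ and $Du$ carry one more derivative than the field they meet in the coupling, and reduces the problem to controlling the two transport commutators by the dissipations and the hypothesized norms.

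For the case $\eta>0,\beta>0$ I would bound the commutators by Lemma \ref{lemma1}: \eqref{2.1} gives $C\|\nabla u\|_{B_{\infty,\infty}^{-\alpha}}\|u\|_{H^s}\|u\|_{H^{s+\alpha}}$, and \eqref{2.2} gives $C\|\nabla u\|_{B_{\infty,\infty}^{-\beta}}\|\tau\|_{H^s}\|\tau\|_{H^{s+\beta}}+C\|\nabla\tau\|_{B_{\infty,\infty}^{-\alpha}}\|\tau\|_{H^s}\|u\|_{H^{s+\alpha}}$. Young's inequality absorbs $\|u\|_{H^{s+\alpha}}^2$ into $\nu\|\Lambda^\alpha u\|_{\tilde{H}^s}^2$ and $\|\tau\|_{H^{s+\beta}}^2$ into $\eta\|\Lambda^\beta\tau\|_{\tilde{H}^s}^2$ (the low-frequency remainders being handled by the $L^2$ estimate via Bernstein), leaving the Gronwall weight $\|\nabla u\|_{B_{\infty,\infty}^{-\alpha}}^2+\|\nabla u\|_{B_{\infty,\infty}^{-\beta}}^2+\|\nabla\tau\|_{B_{\infty,\infty}^{-\alpha}}^2$. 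Since $B_{\infty,\infty}^{-\min\{\alpha,\beta\}}\hookrightarrow B_{\infty,\infty}^{-\alpha}\cap B_{\infty,\infty}^{-\beta}$, this is dominated by $\|\nabla u\|_{B_{\infty,\infty}^{-\min\{\alpha,\beta\}}}^2+\|\nabla\tau\|_{B_{\infty,\infty}^{-\alpha}}^2$, which is $L^1$ in time by hypothesis, so Gronwall closes the estimate.

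The delicate case is $\eta=0$, where $\tau$ has no dissipation to absorb a derivative. Here I would apply \eqref{2.3} with $s_1=s_2=s$ (legitimate since $s>0$ and $s-s_1-\alpha=-\alpha<0$) to get
\begin{equation*}
-\sum_{j\ge0}2^{2js}([\Delta_j,u\cdot\nabla]\tau\,|\,\Delta_j\tau)\le C\|\nabla u\|_{L^\infty}\|\tau\|_{H^s}^2+C\|\nabla\tau\|_{B_{\infty,\infty}^{-\alpha}}\|\tau\|_{H^s}\|u\|_{H^{s+\alpha}},
\end{equation*}
handling $[\Delta_j,u\cdot\nabla]u$ by \eqref{2.1} and absorbing the last term into $\nu\|\Lambda^\alpha u\|_{\tilde{H}^s}^2$. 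The main obstacle is the factor $\|\nabla u\|_{L^\infty}$, which is \emph{not} controlled by the assumed $B_{\infty,\infty}^0$ norm since $B_{\infty,\infty}^0\not\hookrightarrow L^\infty$. I would bridge this with the logarithmic interpolation inequality
\begin{equation*}
\|\nabla u\|_{L^\infty}\le C\big(1+\|\nabla u\|_{B_{\infty,\infty}^0}\log(e+\|u\|_{H^s})\big),
\end{equation*}
valid for $s>2$ in dimension two, which is precisely why the hypothesis requires $s>2$. Writing $X(t)=e+\|u\|_{H^s}^2+\|\tau\|_{H^s}^2$, the energy inequality becomes $\frac{d}{dt}X\le C\|\nabla u\|_{B_{\infty,\infty}^0}\,X\log X+C(\|\nabla u\|_{B_{\infty,\infty}^{-\alpha}}^2+\|\nabla\tau\|_{B_{\infty,\infty}^{-\alpha}}^2+1)X$; setting $Y=\log X$ linearizes it to $\frac{d}{dt}Y\le C\|\nabla u\|_{B_{\infty,\infty}^0}Y+C(\cdots)$ with coefficients that are $L^1$ on $[0,T]$ (Cauchy–Schwarz applied to the $L^2_t$ hypothesis), so Gronwall gives $Y(T)<\infty$, i.e. $\|(u,\tau)(T)\|_{H^s}<\infty$. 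I expect this Osgood/logarithmic-Gronwall step, together with recognizing that $\|\nabla u\|_{L^\infty}$ is unavoidable absent $\tau$-dissipation, to be the technical heart of the argument.
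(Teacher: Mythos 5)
Your proposal is correct and follows essentially the same route as the paper's own proof: the $\tilde{H}^s$ energy estimate with the coupling terms cancelling (which the paper uses silently under ``a standard process'' and you rightly make explicit via the symmetry of $\tau$), Lemma \ref{lemma1} estimates \eqref{2.1}--\eqref{2.2} plus Gronwall for the case $\eta>0,\beta>0$, and \eqref{2.3} with $s_1=s_2=s$ combined with the logarithmic interpolation inequality and a log-Gronwall (Osgood) argument for the case $\eta=0$. The only differences are presentational: you spell out the cancellation $\mathcal{C}=0$ and the substitution $Y=\log X$, both of which the paper leaves implicit.
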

\begin{proof}
It suffices to give the global a priori bound. By a standard process, we have
\begin{equation*}\label{A.2}
\begin{aligned}
\frac{1}{2}\frac{d}{dt}(\|u\|_{\tilde{H}^s}^2&+\|\tau\|_{\tilde{H}^s}^2)+\nu\|\Lambda^\alpha u\|_{\tilde{H}^s}^2+\eta\|\Lambda^\alpha u\|_{\tilde{H}^s}^2\\
=&-\sum_{j\ge 0}2^{2js}([\Delta_j,u\cdot\nabla]u|\Delta_j u)-\sum_{j\ge 0}2^{2js}([\Delta_j,u\cdot\nabla]\tau|\Delta_j \tau)\\
:=& I_1+I_2,
\end{aligned}
\end{equation*}
where the following cancelation property have been used
$$(u\cdot\nabla\Delta_j u|\Delta_j u)=(u\cdot\nabla\Delta_j \tau|\Delta_j \tau)=0.$$
Combining with the energy estimate of $u$ and $\tau$:
$$\frac{1}{2}\frac{d}{dt}(\|u\|_{L^2}^2+\|\tau\|_{L^2}^2)+\nu\|\Lambda^\alpha u\|_{L^2}^2+\eta\|\Lambda^\beta \tau\|_{L^2}^2=0,$$
we derive
\begin{equation}\label{A.2}
\frac{1}{2}\frac{d}{dt}(\|u\|_{H^s}^2+\|\tau\|_{H^s}^2)+\nu\|\Lambda^\alpha u\|_{H^s}^2+\eta\|\Lambda^\beta \tau\|_{H^s}^2=I_1+I_2.
\end{equation}
For the estimate of $I_1$, using (\ref{2.1}), we have
\begin{equation}\label{5.1}
|I_1|\le C\|\nabla u\|_{B_{\infty,\infty}^{-\alpha}}\|u\|_{H^s}\|u\|_{H^{s+\alpha}}\le C(\|\nabla u\|_{B_{\infty,\infty}^{-\alpha}}^2+1)
\|u\|_{H^s}^2+\frac{\nu}{4}\|\Lambda^\alpha u\|_{H^s}^2.
\end{equation}
The estimate of $I_2$ is split into two different situations  depend on the condition of $\beta$ and $\eta$.
\vskip .1in
\underline{$\alpha>0,\eta>0,\beta>0$.}\ Applying (\ref{2.2}), we have
\begin{equation}\label{5.11}
\begin{aligned}
|I_2|\le& C\|\nabla u\|_{B_{\infty,\infty}^{-\beta}}\|\tau\|_{H^s}\|\tau\|_{H^{s+\beta}}+C\|\nabla \tau\|_{B_{\infty,\infty}^{-\alpha}}\|\tau\|_{H^s}\|u\|_{H^{s+\alpha}}\\
\le& C(\|\nabla u\|_{B_{\infty,\infty}^{-\beta}}^2+\|\nabla \tau\|_{B_{\infty,\infty}^{-\alpha}}^2+1)(\|u\|_{H^s}^2+\|\tau\|_{H^s}^2)\\
&
+\frac{\nu}{4}\|\Lambda^\alpha u\|_{H^s}^2+\frac{\eta}{4}\|\Lambda^\beta \tau\|_{H^s}^2.
\end{aligned}
\end{equation}
Combining with (\ref{5.1}) and (\ref{5.11}) in (\ref{A.2}), and applying the Gronwall's inequality yields the desired result.
\vskip .1in
\underline{$\alpha>0,\eta=0$.} \ Applying (\ref{2.3}) with $s_1=s_2=s$,
thanks to the Log-interpolation inequality,
$$\|\nabla u\|_{L^\infty}\le C(\|\nabla u\|_{B_{\infty,\infty}^0}+1)\log (e+\|u\|_{H^s}),\ \ s>2,$$
we obtain
\begin{equation}\label{5.2}
\begin{aligned}
|I_2|\le& C\|\nabla u\|_{L^\infty}\|\tau\|_{H^s}^2+C\|\nabla \tau\|_{B_{\infty,\infty}^{-\alpha}}\|\tau\|_{H^s}\|u\|_{H^{s+\alpha}}\\
\le& C(\|\nabla u\|_{B_{\infty,\infty}^0}^2+\|\nabla \tau\|_{B_{\infty,\infty}^{-\alpha}}^2+1)\log(e+\|u\|_{H^s}^2+\|\tau\|_{H^s}^2)\\
&\times(\|u\|_{H^s}^2+\|\tau\|_{H^s}^2) +\frac{\nu}{4}\|\Lambda^\alpha u\|_{H^s}^2,
\end{aligned}
\end{equation}
which yields the desired result by combining with (\ref{5.1}) and using the Gronwall's inequality.
\end{proof}

\begin{proof}[Proof of (\ref{3.12})] The proof is based on homogeneous Bony's decomposition by modifying  the proof in \cite{Wan} (see Proposition 2.7 there).
Since $\|f\|_{\dot{H}^{2\alpha-3}}\thickapprox \|f\|_{\dot{B}_{2,2}^{2\alpha-3}}$, we have
\begin{equation}\label{5.3}
\begin{aligned}
\|[\mathcal{R}_\alpha,&u\cdot\nabla]\tau\|_{\dot{H}^{2\alpha-3}}^2\le C\sum_{j\in\Z}2^{2j(2\alpha-3)}\|\dot{\Delta}_j[\mathcal{R}_\alpha,u\cdot\nabla]\tau\|_{L^2}^2\\
&\le C\sum_{j\in\Z}2^{2j(2\alpha-3)}\|[\dot{\Delta}_j,u\cdot\nabla]\mathcal{R}_\alpha \tau\|_{L^2}^2
+C\sum_{j\in\Z}2^{2j(2\alpha-3)}\|[\dot{\Delta}_j\mathcal{R}_\alpha,u\cdot\nabla]\tau\|_{L^2}^2\\
&:=M_1+M_2.
\end{aligned}
\end{equation}
By homogeneous Bony's decomposition, $\Theta_{1}=[\dot{\Delta}_j, u \cdot \nabla]\mathcal{R}_\alpha\tau,$
\begin{equation*}
\begin{aligned}
\Theta_1=&\sum_{|k-j|\le4}[\dot{\Delta}_j,\dot{S}_{k-1}u\cdot\nabla]\dot{\Delta}_k\mathcal{R}_\alpha\tau
+\sum_{|k-j|\le4}\dot{\Delta}_j(\dot{\Delta}_ku\cdot\nabla\dot{S}_{k-1}\mathcal{R}_\alpha\tau)\\
&+\sum_{k\ge j-2}\dot{\Delta}_ku\cdot\nabla\dot{\Delta}_j\dot{S}_{k+2}\mathcal{R}_\alpha\tau
+\sum_{k\ge j-3}\dot{\Delta}_j(\dot{\Delta}_ku\cdot\nabla\tilde{\dot{\Delta}}_k\mathcal{R}_\alpha\tau)\\
:=& M_{11}+M_{12}+M_{13}+M_{14}.
\end{aligned}
\end{equation*}
By H\"{o}lder's inequality, (\ref{SCE}) and Bernstein's inequality,
\begin{equation*}
\begin{aligned}
\|M_{11}\|_{L^2}\le& \sum_{|k-j|\le4}\|\nabla\dot{S}_{k-1}u\|_{L^\infty}\|\dot{\Delta}_k\mathcal{R}_\alpha\tau\|_{L^2}\le C\|u\|_{L^\infty}\sum_{|k-j|\le4}2^k\|\dot{\Delta}_k\mathcal{R}_\alpha\tau\|_{L^2}\\
\le& C2^{j(3-2\alpha)}\|u\|_{L^\infty}\sum_{|k-j|\le4}2^{(j-k)(2\alpha-3)}2^{k(2\alpha-2)}\|\dot{\Delta}_k \mathcal{R}_\alpha\tau\|_{L^2}\\
\le& C2^{j(3-2\alpha)}\|u\|_{L^\infty}\|\mathcal{R}_\alpha \tau\|_{\dot{H}^{2\alpha-2}}\sum_{|k-j|\le4}2^{(j-k)(2\alpha-3)}c_k\\
\le& C2^{j(3-2\alpha)}c_j\|u\|_{L^\infty}\|\tau\|_{L^2}.
\end{aligned}
\end{equation*}
By H\"{o}lder's inequality, Bernstein's inequality and (\ref{E}),
\begin{equation*}
\begin{aligned}
\|M_{12}\|_{L^2}\le& \sum_{|k-j|\le4}\|\nabla\dot{S}_{k-1}\mathcal{R}_\alpha u\|_{L^\infty}\|\dot{\Delta}_k u\|_{L^2}\\
\le&C\sum_{|k-j|\le4}2^{-k}\|\nabla\dot{S}_{k-1}\mathcal{R}_\alpha u\|_{L^\infty}\|\nabla\dot{\Delta}_k u\|_{L^2}\\
\le& C\|\nabla u\|_{L^2}\sum_{|k-j|\le4}2^{-k}\|\nabla\dot{S}_{k-1}\mathcal{R}_\alpha u\|_{L^\infty}\\
\le& C2^{j(3-2\alpha)}\|\nabla u\|_{L^2}\sum_{|k-j|\le4}2^{(j-k)(2\alpha-3)}2^{k(2\alpha-4)}\|\nabla\dot{S}_{k-1}\mathcal{R}_\alpha\tau\|_{L^\infty}\\
\le& C2^{j(3-2\alpha)}\|\nabla u\|_{L^2}\|\nabla\mathcal{R}_\alpha\tau\|_{\dot{B}_{\infty,2}^{2\alpha-4}}\sum_{|k-j|\le4}2^{(j-k)(2\alpha-3)}c_k\\
\le& C2^{j(3-2\alpha)}c_j\|\nabla u\|_{L^2}\|\tau\|_{L^2}.
\end{aligned}
\end{equation*}
Similarly,
\begin{equation*}
\begin{aligned}
\|M_{13}\|_{L^2}\le& C2^j\|\dot{\Delta}_j\mathcal{R}_\alpha\tau\|_{L^\infty}\sum_{k\ge j-2}\|\dot{\Delta}_k u\|_{L^2}\\
\le& C 2^j\|\dot{\Delta}_j\mathcal{R}_\alpha\tau\|_{L^\infty}\sum_{k\ge j-2}2^{-k}\|\nabla\dot{\Delta}_k u\|_{L^2}\\
\le& C\|\nabla u\|_{L^2}\|\dot{\Delta}_j\mathcal{R}_\alpha\tau\|_{L^\infty}\\
\le& C2^{j(3-2\alpha)}\|\nabla u\|_{L^2}2^{j(2\alpha-3)}\|\dot{\Delta}_j\mathcal{R}_\alpha\tau\|_{L^\infty}\\
\le& C2^{j(3-2\alpha)}c_j\|\nabla u\|_{L^2}\|\mathcal{R}_\alpha\tau\|_{\dot{B}_{\infty,2}^{{2\alpha-3}}}\\
\le& C2^{j(3-2\alpha)}c_j\|\nabla u\|_{L^2}\|\tau\|_{L^2},
\end{aligned}
\end{equation*}
with the application of  Young's inequality for series,
\begin{equation*}
\begin{aligned}
\|M_{14}\|_{L^2}\le& C2^{2j}\sum_{k\ge j-3}\|\dot{\Delta}_k u\|_{L^2}\|\tilde{\dot{\Delta}}_k\mathcal{R}_\alpha\tau\|_{L^2}\\
\le& C\|\nabla u\|_{L^2}2^{2j}\sum_{k\ge j-3}2^{-k}\|\tilde{\dot{\Delta}}_k\mathcal{R}_\alpha\tau\|_{L^2}\\
\le& C2^{j(3-2\alpha)}\|\nabla u\|_{L^2}\sum_{k\ge j-3}2^{(j-k)(2\alpha-1)}2^{k(2\alpha-2)}\|\tilde{\dot{\Delta}}_k\mathcal{R}_\alpha \tau\|_{L^2}\\
\le& C2^{j(3-2\alpha)}\|\nabla u\|_{L^2}\|\mathcal{R}_\alpha \tau\|_{\dot{H}^{2\alpha-2}}\sum_{k\ge j-3}2^{(j-k)(2\alpha-1)}c_k\\
\le& C2^{j(3-2\alpha)}c_j\|\nabla u\|_{L^2}\|\tau\|_{L^2}.
\end{aligned}
\end{equation*}
Thus we have
$$|M_1|\le C(\|\nabla u\|_{L^2}^2+\|u\|_{L^\infty}^2)\|\tau\|_{L^2}^2\le C\|u\|_{H^\alpha}^2\|\tau\|_{L^2}^2.$$
Next, we bound $M_2$. Using the homogeneous Bony's decomposition again, let $\Theta_2=[\dot{\Delta}_j\mathcal{R}_\alpha,u\cdot\nabla]\tau,$
\begin{equation*}
\begin{aligned}
\Theta_2=&\sum_{|k-j|\le4}[\dot{\Delta}_j\mathcal{R}_\alpha, \dot{S}_{k-1}u\cdot\nabla]\dot{\Delta}_k\tau
+\sum_{|k-j|\le4}\dot{\Delta}_j\mathcal{R}_\alpha(\dot{\Delta}_ku\cdot\nabla\dot{S}_{k-1}\tau)\\
&+\sum_{k\ge j-2}\dot{\Delta}_ku\cdot\nabla\dot{\Delta}_j\mathcal{R}_\alpha\dot{S}_{k+2}\tau
+\sum_{k\ge j-3}\dot{\Delta}_j\mathcal{R}_\alpha(\dot{\Delta}_ku\cdot\nabla \tilde{\dot{\Delta}}_k\tau)\\
:=& M_{21}+M_{22}+M_{23}+M_{24}.
\end{aligned}
\end{equation*}
Since
$$\widehat{\dot{\Delta}_j\mathcal{R}_\alpha f}=\frac{1}{\nu}\varphi(2^{-j}\xi)\frac{\xi_i\xi_j}{|\xi|^{2\alpha}}\widehat{f},$$
one has
$$\dot{\Delta}_j\mathcal{R}_\alpha f=2^{j(4-2\alpha)}\mathfrak{h}(2^j\cdot)\star f,\ \ \rm {for \ some} \ \mathfrak{h}\in \mathcal{S}.$$
Thus using (\ref{WuJMFM}) with $p_1=1$ and $p_2=2$,
\begin{equation*}
\begin{aligned}
\|M_{21}\|_{L^2}\le& C2^{j(3-2\alpha)}\|2^jx\mathfrak{h}(2^j x)\|_{L^1}\sum_{|k-j|\le4}\|\nabla \dot{S}_{k-1}u\|_{L^\infty}\|\nabla \dot{\Delta}_k\tau\|_{L^2}\\
\le& C2^{j(3-2\alpha)}\|u\|_{L^\infty}\sum_{|k-j|\le4}\|\Delta_k \tau\|_{L^2}\\
\le& C2^{j(3-2\alpha)}c_j\|u\|_{L^\infty}\|\tau\|_{L^2}.
\end{aligned}
\end{equation*}
By Bernstein's inequality and H\"{o}lder's inequality,
\begin{equation*}
\begin{aligned}
\|M_{22}\|_{L^2}\le& C2^{j(2-2\alpha)}\sum_{|k-j|\le4}\|\dot{\Delta}_k u\|_{L^\infty}\|\nabla \dot{S}_{k-1}\tau\|_{L^2}\\
\le& C2^{j(3-2\alpha)}\|\tau\|_{L^2}\sum_{|k-j|\le4}2^{k-j}\|\dot{\Delta}_k u\|_{L^\infty}\\
\le& C2^{j(3-2\alpha)}\|\tau\|_{L^2}\sum_{|k-j|\le4}2^{k-j}2^k\|\dot{\Delta}_k u\|_{L^2}\\
\le& C2^{j(3-2\alpha)}\|\tau\|_{L^2}\|\nabla u\|_{L^2}\sum_{|k-j|\le 4}2^{k-j}c_k\\
\le& C2^{j(3-2\alpha)}c_j\|\tau\|_{L^2}\|\nabla u\|_{L^2}.
\end{aligned}
\end{equation*}
Similarly,
\begin{equation*}
\begin{aligned}
\|M_{23}\|_{L^2}\le& C2^j\|\dot{\Delta}_j \mathcal{R}_\alpha\tau\|_{L^\infty}\sum_{k\ge j-2}\|\dot{\Delta}_k u\|_{L^2}\\
\le& C2^j\|\dot{\Delta}_j \mathcal{R}_\alpha\tau\|_{L^\infty}\sum_{k\ge j-2}2^{-k}\|\nabla\dot{\Delta}_k u\|_{L^2}\\
\le& C2^{j(3-2\alpha)}\|\nabla u\|_{L^2}2^{j(2\alpha-3)}\|\dot{\Delta}_j \mathcal{R}_\alpha\tau\|_{L^\infty}\\
\le& C2^{j(3-2\alpha)}c_j\|\nabla u\|_{L^2}\|\mathcal{R}_\alpha \tau\|_{\dot{B}_{\infty,2}^{2\alpha-3}}\\
\le& C2^{j(3-2\alpha)}c_j\|\nabla u\|_{L^2}\|\tau\|_{L^2}.
\end{aligned}
\end{equation*}
Using Young's inequality for series, we get
\begin{equation*}
\begin{aligned}
\|M_{24}\|_{L^2}\le& C2^{j(4-2\alpha)}\sum_{k\ge j-3}\|\dot{\Delta}_k u\|_{L^2}\|\tilde{\dot{\Delta}}_k \tau\|_{L^2}\\
\le& C2^{j(4-2\alpha)}\|\tau\|_{L^2}\sum_{k\ge j-3}\|\dot{\Delta}_k u\|_{L^2}\\
\le& C2^{j(3-2\alpha)}\|\tau\|_{L^2}\sum_{k\ge j-3}2^{j-k}2^k\|\dot{\Delta}_k u\|_{L^2}\\
\le& C2^{j(3-2\alpha)}c_j\|\tau\|_{L^2}\|\nabla u\|_{L^2}.
\end{aligned}
\end{equation*}
As a consequence,
$$|M_2|\le C(\|\nabla u\|_{L^2}^2+\|u\|_{L^\infty}^2)\|\tau\|_{L^2}^2\le C\|u\|_{H^\alpha}^2\|\tau\|_{L^2}^2.$$
Together with the estimate of $M_1$ in (\ref{5.3}) can yield the desired inequality (\ref{3.12}).
\end{proof}
\vskip .4in
\section*{Acknowledgements}
Special thanks go to   Prof. Majdoub for kindly notifying the author of  his interesting work \cite{BM}.

\vskip .4in


\begin{thebibliography}{99}


\bibitem{BCD} H. Bahouri, J.-Y. Chemin, R. Danchin, \textit{Fourier Analysis and Nonlinear Partial Differential Equations}, Grundlehren der mathematischen Wissenschaften,  Springer, Heidelberg, 2011.
\bibitem{BM} O. Bejaoui, M. Majdoub, Global weak solutions for some Oldroyd Models,   {\it   J. Differential Equations  \bf 254} (2013), 660-685.

\bibitem{CM} J.-Y. Chemin, N. Masmoudi,    About the lifespan of the regular solutions of equations related to viscoelastic fluids,   {\it   SIAM J. Math. Anal.   \bf 33} (2001), 84-112.


\bibitem{CMZ1} Q. Chen, C. Miao, Z. Zhang,  A new Bernstein's inequality and the 2D dissipative quasi-geostrophic equations,   {\it   Commun. Math.   Phys.  \bf 271} (2007), 821-838.

\bibitem{CMZ2} Q. Chen, C. Miao, Z. Zhang, Global well-posedness for compressible Navier-Stokes equations with highly oscillating initial velocity,   {\it   Comm. Pure   Appl. Math.  \bf 63} (2010), 1173-1224.

\bibitem{CRW} R.R. Coifman, R. Rochberg, G. Weiss, Factorization theorems for Hardy spaces on serval variables, {\it   Ann. of Math.    \bf 103} (1976), 611-635.

 \bibitem{CK} P. Constantin,  M. Kliegl, Note on global regularity for two-dimensional  Oldroyd-B fluids with diffusive  stress,  {\it   Arch. Rational Mech. Anal.   \bf 206} (2012), 725-740.

\bibitem{ER} T.M. Elgidi, F. Rousset, Global regularity for some Oldroyd-B type models,  {\it   Comm. Pure   Appl. Math.} (2015), online.
\bibitem{FHZ} D. Fang, M. Hieber, R. Zi, Global existence results for Oldroyd-B fluids in exterior domains: the case of non-small coupling parameters,
{\it Math. Ann \bf 357} (2013), 687-709.

\bibitem{Grafakos} L. Grafakos, \textit{Modern Fourier Analysis.} 2nd Edition., Grad. Text in Math., \textbf{250}, Springer-Verlag, 2008.


\bibitem{Hmidi1} T. Humidi, S. Keraani, F. Rousset, Global well-posedness for a  Boussinesq-Navier-Stokes  system with  critical dissipation,   {\it   J. Differential Equations  \bf 249} (2010), 2147-2174.


\bibitem{Hmidi2} T. Humidi, S. Keraani, F. Rousset, Global well-posedness for a  Euler-Boussinesq  system with  critical dissipation,   {\it   Comm. Partial  Differential Equations  \bf 36} (2011), 420-445.

\bibitem{JMWZ} Q. Jiu, C. Miao, J. Wu, Z. Zhang,  The 2D incompressible Boussinesq equations with general critical dissipation,   {\it  SIAM J. Math. Anal.  \bf 46} (2014), 3426-3554.

\bibitem{LMZ} Z. Lei, N. Masmoudi, Y. Zhou,  Remarks on the blowup criteria for Oldroyd  models,  {\it   J. Differential  Equations.    \bf 248} (2010), 328-341.


\bibitem{MB} A.J. Majda and A.L. Bertozzi, \textit{Vorticity and Incompressible Flow}, Cambridge University Press, Cambridge, UK, 2001.

\bibitem{SW} A. Stefanov, J. Wu, A global regularity result for the 2D Boussinesq equations with critical dissipation, arXiv:1411.1362v3 [math.AP] 2 Mar 2015.

\bibitem{Stein} E.M.  Stein, \textit{Singular Integrals and Differentiability Properties of Functions}, Princeton University Press, Princeton, 1970.


\bibitem{Wan} R. Wan, Global small solutions to a tropical climate model without thermal diffusion, arXiv:1506.06930v2 [math.AP] 6 Jul 2015.

\bibitem{WuJMFM} J. Wu,  Global regularity for a class of generalized Magnetohydrodynamic equations,   {\it  J. Math. Fluid Mech.  \bf  13} (2011), 295-305.

\bibitem{ZFZ} R. Zi, D. Fang, T. Zhang,  Global solution to the incompressible Oldroyd-B model in the critical $L^p$ framework: the case of the non-small coupling parameter,        {\it   Arch. Rational Mech. Anal.   \bf 213} (2014), 651-687.

\end{thebibliography}
\end{document}